%
%
%
%
\documentclass{amsart}

\usepackage{graphicx,amssymb,mathrsfs,amsmath,color,fancyhdr}
\usepackage[all]{xy}
\newtheorem{theorem}{Theorem}[section]
\newtheorem{lemma}[theorem]{Lemma}

\theoremstyle{definition}

\newtheorem{conjecture}[theorem]{Conjecture}

\newtheorem{corollary}[theorem]{Corollary}

\theoremstyle{remark}
\newtheorem{remark}[theorem]{Remark}

\numberwithin{equation}{section}


\newcommand{\bdot}{\boldsymbol{\cdot}}


\begin{document}

\title[On generalized Erd\H{o}s-Ginzburg-Ziv constants of $C_n^r$]
{On generalized Erd\H{o}s-Ginzburg-Ziv constants of $C_n^r$}

\begin{abstract}
Let $G$ be an additive finite abelian group with exponent $\exp(G)=m$. For any positive integer $k$, the $k$-th generalized Erd\H{o}s-Ginzburg-Ziv constant $\mathsf s_{km}(G)$ is defined as the smallest positive integer $t$ such that every sequence $S$ in $G$ of length at least $t$ has a zero-sum subsequence of length $km$. It is easy to see that $\mathsf s_{kn}(C_n^r)\ge(k+r)n-r$ where $n,r\in\mathbb N$. Kubertin conjectured that the equality holds for any $k\ge r$.
In this paper, we mainly prove the following results:
\begin{enumerate}
\item For every positive integer $k\ge 6$, we have
$$\mathsf s_{kn}(C_n^3)=(k+3)n+O(\frac{n}{\ln n}).$$
\item For every positive integer $k\ge 18$, we have
$$\mathsf s_{kn}(C_n^4)=(k+4)n+O(\frac{n}{\ln n}).$$
\item For $n\in \mathbb N$, assume that the largest prime power divisor of $n$ is $p^a$ for some $a\in\mathbb N$. For any fixed $r\ge 5$, if $p^t\ge r$ for some $t\in\mathbb N$, then for any $k\in\mathbb N$ we have
$$\mathsf s_{kp^tn}(C_n^r)\le(kp^t+r)n+c_r\frac{n}{\ln n},$$
where $c_r$ is a constant depends on $r$.
\end{enumerate}
Note that the main terms in our results are consistent with the conjectural values proposed by Kubertin. 
\end{abstract}

\author{Dongchun Han}
\address{Department of Mathematics, Southwest Jiaotong University, Chengdu 610000, P.R. China}
\email{han-qingfeng@163.com}
\author{Hanbin Zhang}
\address{Academy of Mathematics and Systems Science, Chinese Academy of Sciences, Beijing
100190, P.R. China}
\email{zhanghanbin@amss.ac.cn}

\keywords{}
\maketitle

\section{Introduction}

Let $G$ be an additive finite abelian group with exponent $\exp(G)=m$. Let $S=g_1\bdot\ldots\bdot g_k$ be a sequence over $G$ (repetition is allowed), where $g_i\in G$ for $1\le i\le k$, $k$ is called the length of the sequence $S$. We call $S$ a zero-sum sequence if $\sum^k_{i=1}g_i=0$. The classical direct zero-sum problem studies conditions (mainly refer to lengths) which ensure that given sequences have non-empty zero-sum subsequences with prescribed properties (also mainly refer to lengths). For example, the Davenport constant, denoted by $\mathsf D(G)$, is the smallest positive integer $t$ such that every sequence $S$ over $G$ of length at least $t$ has a nonempty zero-sum subsequence. It is easy to prove that $\mathsf D(C_n)=n$, where $C_n$ is the cyclic group of order $n$. For any positive integer $k$, the $k$-th generalized Erd\H{o}s-Ginzburg-Ziv constant $\mathsf s_{km}(G)$ is defined as the smallest positive integer $t$ such that every sequence $S$ over $G$ of length at least $t$ has a zero-sum subsequence of length $km$. In particular, for $k=1$, $\mathsf s_m(G)$ is called the Erd\H{o}s-Ginzburg-Ziv constant, which is a classical invariant in combinatorial number theory. In 1961, Erd\H{o}s, Ginzburg and Ziv \cite{EGZ} proved that $\mathsf s_n(C_n)=2n-1$ which is usually regarded as a starting point of zero-sum theory (see \cite{ADZ} for other different proofs of this result). We refer to \cite{GG} for a survey of zero-sum problems. In this paper, we will focus on $\mathsf s_{km}(G)$.

Let $G=C_n^r=\langle e_1\rangle\oplus\cdots\oplus\langle e_r\rangle$. Assume that $T$ consists of $n-1$ copies of $e_i$ for $1\le i\le r$. Let $S$ consist of $kn-1$ copies of $0$ and $T$, then it is easy to show that $S$ is a sequence over $C_n^r$ of length $(k+r)n-r-1$ and $S$ contains no zero-sum subsequences of length $kn$. Consequently we have
\begin{equation}\label{eq1.1}
\mathsf s_{kn}(C_n^r)\ge(k+r)n-r.
\end{equation}
For general finite abelian group $G$ with $\exp(G)=m$, similar construction can be used to show that $\mathsf s_{km}(G)\ge km+\mathsf D(G)-1$ holds for $k\ge1$. In 1996, Gao \cite{Gao3} proved that $\mathsf s_{km}(G)= km+\mathsf D(G)-1$, provided that $km\ge|G|$. In \cite{GaoThang}, Gao and Thangadurai proved that if $km<\mathsf D(G)$, then $\mathsf s_{km}(G)> km+\mathsf D(G)-1$. Define $l(G)$ as the smallest integer $t$ such that $\mathsf s_{km}(G)=km+\mathsf D(G)-1$ holds for every $k\ge t$. From the above we know that
$$\frac{\mathsf{D}(G)}{m}\le l(G)\le \frac{|G|}{m}.$$
Recently, Gao, Han, Peng and Sun conjectured (\cite{GHPS}, Conjecture 4.7) that $$l(G)=\lceil\frac{\mathsf D(G)}{m}\rceil.$$
Clearly we have $l(C_n)=1$ by the Erd\H{o}s-Ginzburg-Ziv theorem. For finite abelian groups $G$ of rank two, $l(G)=2$ (see \cite{GHPS}). Let $p$ be a prime and $q$ a power of $p$, the above conjecture was verified for $C_q^r$ where $1\le r\le 4$ (also more generally for abelian $p$-group $G$ with $\mathsf D(G)\le 4m$) except for some cases when $p$ is rather small, see \cite{GaoThang, HZ, K}. For the studies of $l(G)$ for the general cases, we refer to \cite{GHPS, He, K}.

Recall (\ref{eq1.1}) that $\mathsf s_{kn}(C_n^r)\ge(k+r)n-r$, in \cite{K}, Kubertin conjectured that the equality actually holds for any $k\ge r$.
\begin{conjecture}\label{conj1}
For any positive integers $k,n$ with $k\ge r$, we have
$$\mathsf s_{kn}(C_n^r)=(k+r)n-r.$$
\end{conjecture}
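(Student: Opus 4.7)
The lower bound $\mathsf s_{kn}(C_n^r)\ge(k+r)n-r$ is immediate from the construction preceding~\eqref{eq1.1}, so the conjecture reduces to proving the matching upper bound whenever $k\ge r$. My plan is a two-stage attack: establish the conjecture first for $n=p$ prime, then lift to general $n$ by induction on the number of prime factors of $n$.

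The prime case I would attack by the Chevalley--Warning polynomial method. Representing the choice of subsequence by variables $x_1,\dots,x_N\in\mathbb F_p$ with $N=(k+r)p-r$, the zero-sum condition translates into the simultaneous vanishing of $r$ polynomials of degree $p-1$ (one per coordinate), and a ``length divisible by $p$'' condition supplies an $(r+1)$st polynomial of the same degree; Chevalley--Warning then produces a non-trivial common zero, yielding a zero-sum subsequence whose length is a positive multiple of $p$. A supplementary iterative extraction would sharpen this to length \emph{exactly} $kp$. This approach has been pushed through for $r\le 4$ (Reiher, Kubertin); for larger $r$ the sharp prime bound $\mathsf s_p(C_p^r)=(r+1)p-r$ remains a central open problem.

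For the lifting step, write $n=pm$ with $p$ a prime divisor of $n$. Given $S$ over $C_n^r$ of length $(k+r)n-r$, I would reduce modulo $p$ to obtain $\overline S$ over $C_p^r$, and iteratively extract zero-sum subsequences of length $p$ from $\overline S$, invoking the sharp prime bound $\mathsf s_p(C_p^r)=(r+1)p-r$. Extraction continues so long as the residual length is at least $(r+1)p-r$, and a direct count shows this permits exactly $(k+r)m-r$ blocks. Each block-sum in $S$ lies in $pC_n^r\cong C_m^r$, and after dividing by $p$ these block-sums form a sequence $T$ over $C_m^r$ with $|T|=(k+r)m-r=\mathsf s_{km}(C_m^r)$ by the inductive hypothesis. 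Hence $T$ contains a zero-sum subsequence of length $km$, whose pre-image in $S$ is the desired zero-sum subsequence of length $kpm=kn$.

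The principal obstacle is that the lifting is sharp only when $\mathsf s_p(C_p^r)=(r+1)p-r$ holds \emph{exactly}; this is classical for $r\le 2$ (EGZ, Reiher) but open for $r\ge 3$, known only up to additive error $O(p/\ln p)$. That deficiency compounds whenever $n$ has only small prime divisors, and selecting $p$ to be the largest prime (or prime power) divisor of $n$ yields a total error of order $n/\ln n$, exactly matching the statements in the abstract. Removing this error, i.e.\ proving Conjecture~\ref{conj1} in full, would require either a sharp combinatorial resolution of the prime case for all $r$, or a fundamentally new lifting argument that circumvents the prime case; neither seems accessible by current methods.
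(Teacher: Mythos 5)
The statement you were given is a conjecture: the paper does not prove it, and neither do you. Your closing paragraph concedes this, and that concession is consistent with the paper, whose actual results (Theorem \ref{theorem1}) only achieve $(k+r)n+O_r(n/\ln n)$, and only for $k\ge 6$ when $r=3$, $k\ge 18$ when $r=4$, and with the modified multiplier $k\mathsf p(n,r)$ when $r\ge 5$. So what you have written is, at best, a sketch of an approach to Theorem \ref{theorem1}, not a proof of Conjecture \ref{conj1}.

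Even as such a sketch it has a genuine error at its load-bearing step. The ``sharp prime bound $\mathsf s_p(C_p^r)=(r+1)p-r$'' on which your block-extraction count rests is false for every $r\ge 2$, not merely open: already the trivial lower bound $\mathsf s(C_p^r)\ge 2^r(p-1)+1$ (and Reiher's theorem $\mathsf s_p(C_p^2)=4p-3$, not $3p-2$) exceeds $(r+1)p-r$, and by Elsholtz's construction $\mathsf s(C_p^r)$ grows exponentially in $r$. The constant that genuinely equals $(k+q\text{-rank term})$ is the \emph{generalized} constant $\mathsf s_{kq}(C_q^r)=(k+r)q-r$ for $q$ a prime power and $k$ above a threshold (Lemma \ref{lemma1.2}, Corollary \ref{coro2}), and this forces the reduction to run in the opposite direction from yours. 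You project onto $C_p^r$ and recurse on the kernel $C_m^r$, so you need exactness in the quotient $C_p^r$, which is unavailable. The paper instead takes the kernel to be $C_q^r$ with $q=\mathsf M(n)$ the largest prime-power divisor, projects onto the quotient $C_{n/q}^r$, extracts $(k+r)q-r$ blocks of length $n/q$ there using only the Alon--Dubiner bound $\mathsf s(C_{n/q}^r)\le (cr\log_2 r)^r\,n/q$ (Lemma \ref{lemma2.1}), and then applies the exact value of $\mathsf s_{kq}(C_q^r)$ to the block sums. This single-step reduction is what yields the error term $a_r\,n/\mathsf M(n)\le 2a_r\,n/\ln n$ via Lemma \ref{lemma3.2}; your recursion, besides resting on a false identity, would compound a necessarily nonzero error at every prime factor of $n$ rather than paying it once at the quotient.
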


According to the results in \cite{GaoThang, HZ, K}, Conjecture \ref{conj1} has been verified for $r\le 4$ except for some cases when $p$ is rather small ($p\le 3$). Recently, Sidorenko \cite{S1,S2} verified Conjecture \ref{conj1} for $C_2^r$. He \cite{S1} also applied his results to prove new bounds for the codegree Tur\'{a}n density of complete $r$-graphs. Moreover, he \cite{S2} established connections between $\mathsf s_{2k}(C_2^r)$ and linear binary codes. Actually, he showed that the problem of determining $\mathsf s_{2k}(C_2^r)$ is essentially equivalent to finding the lowest redundancy of a linear binary code of given length which does not contain words of Hamming weight $2k$.

Towards Conjecture \ref{conj1}, Kubertin \cite{K} proved that
$$\mathsf s_{kq}(C_q^r)\le (k+\frac{3}{8}r^2+\frac{3}{2}r-\frac{3}{8})q-r,$$
where $p>\min\{2k,2r\}$ is a prime and $q$ is a power of $p$.
By extending the method of Kubertin, He \cite{He} improved the above upper bound and obtained that $$\mathsf s_{kq}(C_q^r)\le(k+5r-2)q-3r$$
when $2p \ge 7r-3$ and $k\ge r$. He also proved that $\mathsf s_{kn}(C_n^r)\le 6kn$ for $n$ with
large prime factors and $k$ sufficiently large. More precisely, he showed that for $r,l>0$, $n=p_1^{\alpha_1}\cdots p_l^{\alpha_l}$ with distinct prime factors $p_1,\ldots,p_l\ge\frac{7}{2}r-3$ and $k=a_1\cdots a_l$ a product of positive integers $a_1,\ldots,a_l\ge r$, $\mathsf s_{kn}(C_n^r)\le 6kn$. We also refer to \cite{BGH,Gen} for some recent results on the lower bound of $\mathsf s_{kn}(C_n^r)$ when $k$ is much smaller than the rank $r$, note that in this case $\mathsf s_{kn}(C_n^r)>(k+r)n-r$ (see \cite{GaoThang}).

For $n\in\mathbb N$, let
$$\mathsf M(n)=\max\{p^k|\text{ with }p^k|n\text{ where $p$ is a prime and k}\in \mathbb N\},$$
i.e., the largest prime power divisor of $n$. For convenience, let $\mathsf M(1)=1$. For any $n,r\in \mathbb N$, we define
$$\mathsf p(n,r)=\min\{p^t|\text{ }\mathsf M(n)=p^a\text{ and }p^t\ge r\}.$$

In this paper, we focus on the Conjecture \ref{conj1} and prove the following results.
\begin{theorem}\label{theorem1}
Let $k\in \mathbb N$. We have
\begin{enumerate}
\item For every $k\ge 6$,
$$\mathsf s_{kn}(C_n^3)=(k+3)n+O(\frac{n}{\ln n});$$
\item For every $k\ge 18$,
$$\mathsf s_{kn}(C_n^4)=(k+4)n+O(\frac{n}{\ln n});$$
\item For every $k\in \mathbb N$ and fixed $r\ge 5$,
$$\mathsf s_{k\mathsf p(n,r)n}(C_n^r)=(k\mathsf p(n,r)+r)n+O_r(\frac{n}{\ln n}),$$
where $O_r$ depends on $r$.
\end{enumerate}
\end{theorem}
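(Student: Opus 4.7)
The lower bounds $(k+r)n-r$ in parts (1), (2) and $(k\mathsf p(n,r)+r)n-r$ in part (3) are already given by the construction in (1.1), so the task is to produce matching upper bounds with error $O_r(n/\ln n)$. My plan is a one-shot reduction to the prime-power case by isolating the largest prime-power divisor $\mathsf M(n)=p^a$ of $n$.

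Concretely, I would write $n = p^a m$ with $\gcd(p,m)=1$, so that CRT yields $C_n^r \cong C_{p^a}^r \oplus C_m^r$ and $mC_n^r \cong C_{p^a}^r$ (since $m$ is invertible modulo $p^a$). Given a long sequence $S$ in $C_n^r$, I would iteratively extract length-$m$ subsequences that are zero-sum in the $C_m^r$-component; each such block has sum lying in $mC_n^r \cong C_{p^a}^r$. After $\mathsf s_{\ell p^a}(C_{p^a}^r)$ blocks have been extracted, the definition of that EGZ constant provides $\ell p^a$ block sums that vanish in $C_{p^a}^r$, producing a zero-sum subsequence of $S$ of total length $\ell p^a\cdot m = \ell n$. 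This yields
\begin{equation*}
  \mathsf s_{\ell n}(C_n^r) \;\le\; \mathsf s_m(C_m^r) + \bigl(\mathsf s_{\ell p^a}(C_{p^a}^r) - 1\bigr)\,m,
\end{equation*}
applied with $\ell = k$ for parts (1), (2) and with $\ell = k\,\mathsf p(n,r) = k p^t$ for part (3); in every case $\ell \ge r$, placing us in the conjectural regime of Kubertin.

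For the base case $\mathsf s_{\ell p^a}(C_{p^a}^r)$, parts (1), (2) rely on the fact that Kubertin's conjecture $\mathsf s_{\ell p^a}(C_{p^a}^r) = (\ell+r)p^a - r$ is already known for $r \le 4$ when $p$ is not too small; the thresholds $k\ge 6$ and $k\ge 18$ are tuned so that the residual small-$p$ exceptions can be covered by complementary arguments. For part (3) ($r\ge 5$), Kubertin's conjecture is open, and one must instead establish the weaker uniform bound $\mathsf s_{\ell p^a}(C_{p^a}^r) \le (\ell+r)p^a + O_r(1)$: the key feature is that $p^t\mid\ell$ with $p^t\ge r$, which forces the block sums of any inductive lifting to lie in a $p^t$-divisible subgroup, enabling an additional reduction that would fail for the naive multiplier $k$; combined with the bounds of Kubertin \cite{K} and He \cite{He}, this should deliver the needed estimate.

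The $O_r(n/\ln n)$ error in the final bound collapses thanks to two facts. First, $\mathsf M(n) \ge c\ln n$: if every prime-power divisor of $n$ were at most $x$, then $\ln n \le \pi(x)\cdot\ln x = O(x)$ by the prime number theorem, forcing $x = \mathsf M(n) \ge c\ln n$, and hence $m = n/p^a \le n/(c\ln n)$. Second, the standard linear bound $\mathsf s_m(C_m^r) = O_r(m)$ valid for every $m$. Together with the displayed inequality these give $\mathsf s_{\ell n}(C_n^r) \le (\ell+r)n + O_r(n/\ln n)$, which matches the lower bound. The principal obstacle is the base case in part (3): for $r\ge 5$ and an arbitrary (possibly small) $p$, ensuring that the error constant in $\mathsf s_{\ell p^a}(C_{p^a}^r) \le (\ell+r)p^a + O_r(1)$ depends only on $r$ and is independent of $p$, $a$, and $k$ is the most delicate point.
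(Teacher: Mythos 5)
Your reduction is essentially the paper's: split off the largest prime-power divisor $\mathsf M(n)=p^a$, project onto $C_m^r$ with $m=n/p^a$, repeatedly extract zero-sum blocks of length $m$ whose sums land in the kernel $C_{p^a}^r$, close with the generalized EGZ constant of $C_{p^a}^r$, and control the error via Alon--Dubiner's bound $\mathsf s(C_m^r)=O_r(m)$ together with $\mathsf M(n)\ge\tfrac12\ln n$. All of that matches the paper's key lemma and its corollary in Section 3, and your counting inequality is correct. The paper itself notes that this lifting argument is standard.

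The genuine gaps are in the prime-power base cases, which you acknowledge but do not fill, and which is where the paper's actual work lies. For parts (1) and (2), the known results of Kubertin \cite{K} and Han--Zhang \cite{HZ} give $\mathsf s_{kp^a}(C_{p^a}^r)=(k+r)p^a-r$ only for $p\ge5$; since $\mathsf M(n)$ may well be a power of $2$ or $3$, the cases $p\in\{2,3\}$ must be proved, and ``complementary arguments'' is not a proof. The paper handles them by induction on $k$: Gao's theorem \cite{Gao1} (namely $\mathsf s_{kp^m\exp(G)}(G)=kp^{m}\exp(G)+\mathsf D(G)-1$ whenever $p^m\exp(G)\ge\mathsf D(G)$, $G$ a $p$-group) supplies the base cases $k=4,6,12,18$ for $C_{2^n}^3$, $C_{3^n}^3$, $C_{2^n}^4$, $C_{3^n}^4$ respectively, and the inductive step peels off one zero-sum block of length $\exp(G)$ using the exact values $\mathsf s(C_{2^n}^3)=8\cdot2^n-7$, $\mathsf s(C_{3^n}^3)=9\cdot3^n-8$, $\mathsf s(C_{2^n}^4)=16\cdot2^n-15$, $\mathsf s(C_{3^n}^4)=20\cdot3^n-19$; this is precisely where the thresholds $k\ge6$ and $k\ge18$ come from (the binding case being $p=3$). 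For part (3), the point you flag as ``most delicate'' is resolved in one line, and not via the bounds of Kubertin or He --- those require $p$ large relative to $r$ and so cannot yield a constant depending only on $r$. Instead, since $\mathsf p(n,r)=p^t\ge r$, one has $p^{t+a}\ge rp^a\ge r(p^a-1)+1=\mathsf D(C_{p^a}^r)$, so Gao's theorem gives the exact value $\mathsf s_{kp^t\cdot p^a}(C_{p^a}^r)=kp^{t+a}+\mathsf D(C_{p^a}^r)-1=(kp^t+r)p^a-r$, with no error term at all. Without these two inputs your argument does not close.
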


Note that the main terms in Theorem \ref{theorem1} are consistent with the conjectural values in Conjecture \ref{conj1}. Moreover, the error term can be improved in some cases. By some further studies of $\mathsf M(n)$, roughly speaking, for any real number $A\ge 1$, we can improve the order of the error term from $\frac{n}{\ln n}$ to $\frac{n}{(\ln n)^A}$ for almost every $n\ge 1$. Furthermore, when the number of distinct prime divisors of $n$ is a given integer $m$, we can even improve the order of the error term to $n^{1-\frac{1}{m}}$.

The following sections are organized as follows. In Section 2, we shall introduce some notations and preliminary results. In Section 3, we will prove our main results. In Section 4, we will provide further studies on $\mathsf M(n)$ and then apply these results to improve our main results.

\section{Preliminaries}

This section will provide more rigorous definitions and notations. We also introduce some preliminary results that will be used repeatedly below.

Let $\mathbb{N}$ denote the set of positive integers, $\mathbb{N}_0=\mathbb{N}\cup\{0\}$ and $\mathbb R$ the field of real numbers. Let $f$ and $g$ be real valued functions, both defined on $\mathbb N$, such that $g(x)$ is strictly positive for all large enough values of $x$. Then we denote
$f(x)=O(g(x))$ if and only if there exists a positive real number $M$ and a positive integer $x_0$ such that
$$|f(x)|\le M|g(x)|\qquad {\text{ for all }}x\geq x_{0}.$$
We also use the notation $O_r$ (resp. $O_{A,\epsilon}$) which means that the above $M$ depends on $r$ (resp. $A$ and $\epsilon$), where $r\in\mathbb N_0$, $A,\epsilon\in \mathbb R$.
Similarly, we denote $f(x)=o(g(x))$ if and only if for every positive constant $\varepsilon$, there exists a positive integer $x_0$ such that
$$|f(x)|\le \varepsilon g(x)\qquad {\text{for all }}x\geq x_0.$$

Let $G$ be an additive finite abelian group. By the fundamental theorem of finite abelian groups we have
$$G\cong C_{n_1}\oplus\cdots\oplus C_{n_r}$$
where $r=\mathsf r(G)\in \mathbb{N}_0$ is the rank of $G$, $n_1|\cdots|n_r\in\mathbb{N}$ are positive integers. Moreover, $n_1,\ldots,n_r$ are uniquely determined by $G$, and $n_r=\exp(G)$ is called the $exponent$ of $G$.

We define a $sequence$ over $G$ to be an element of the free abelian monoid $\big(\mathcal F(G),\bdot\big)$, see Chapter 5 of \cite{GH} for detailed explanation. Our notations of sequences follow the notations in the paper \cite{GeG}. In particular, in order to avoid confusion between exponentiation of the group operation in $G$ and exponentiation of the sequence operation $\bdot$ in $\mathcal F (G)$, we define:
\[
g^{[k]}=\underset{k}{\underbrace{g\bdot\ldots\bdot g}}\in \mathcal F (G)\quad \text{and} \quad T^{[k]}=\underset{k}{\underbrace{T\bdot\ldots\bdot T}}\in \mathcal F (G) \,,
\]
for $g \in G$, \ $T\in \mathcal F (G)$ and $k \in \mathbb N_0$.

We write a sequence $S$ in the form
$$S=\prod_{g\in G}g^{\textsf{v}_g(S)}\text{ with }\textsf{v}_g(S)\in \mathbb{N}_0\text{ for all }g\in G.$$
We call
\begin{itemize}
\item $\textsf{v}_g(S)$ the $multiplicity$ of $g$ in $S$,

\item $|S|=l=\sum_{g\in G}\textsf{v}_g(S)\in \mathbb{N}_0$ the $length$ of $S$,

\item $T=\prod_{g\in G}g^{\textsf{v}_g(T)}$ a $subsequence$ of $S$ if $\textsf{v}_g(T)\le \textsf{v}_g(S)$ for all $g\in G$, and denote by $T|S$,

\item $\sigma(S)=\sum\limits_{i=1}\limits^{l}g_i=\sum_{g\in G}\textsf{v}_g(S)g\in G$ the $sum$ of $S$,

\item $S$ a $zero$-$sum$ $sequence$ if $\sigma(S)=0$,

\item $S$ a $zero$-$sum$ $free$ $sequence$ if $\sigma(T)\neq0$ for every $T|S$,

\item $S$ a $short$ $zero$-$sum$ $sequence$ if it is a zero-sum sequence of length $|S|\in[1,\text{exp}(G)]$.
\end{itemize}

Using these concepts, we can define

\begin{itemize}
\item $\mathsf D(G)$ as the smallest integer $l\in \mathbb{N}$ such that every sequence $S$ over $G$ of length $|S|\geq l$ has a non-empty zero-sum subsequence. We call $\mathsf D(G)$ the $Davenport$ $constant$ of $G$.

\item $\mathsf s_{k\exp(G)}(G)$ as the smallest integer $l\in \mathbb{N}$ such that every sequence $S$ over $G$ of length $|S|\geq l$ has a non-empty zero-sum subsequence $T$ of length $|T|=k\exp(G)$, where $k\in\mathbb N$. We call $\mathsf s(G):=\mathsf s_{\exp(G)}(G)$ the Erd\H{o}s-Ginzburg-Ziv constant and $\mathsf s_{k\exp(G)}(G)$ the $k$-th generalized Erd\H{o}s-Ginzburg-Ziv constant.

\end{itemize}

\begin{lemma} \label{lemma1.1}{\rm{(\cite{GH}, Theorem 5.5.9)}}
Let $G$ be a finite abelian $p$-group and $G=C_{p^{n_{1}}}\oplus\cdots\oplus C_{p^{n_{r}}}$, then
$$\mathsf D(G)=\sum\limits_{i=1}\limits^{r}(p^{n_i}-1)+1.$$
\end{lemma}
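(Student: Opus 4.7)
The plan is to prove the two inequalities $\mathsf D(G)\ge D^*$ and $\mathsf D(G)\le D^*$ separately, writing $D^*:=\sum_{i=1}^r(p^{n_i}-1)+1$. Fix generators $e_1,\ldots,e_r$ with $\operatorname{ord}(e_i)=p^{n_i}$, so $G=\bigoplus_{i=1}^r\langle e_i\rangle$. For the lower bound, the natural zero-sum free candidate is
\[S=\prod_{i=1}^r e_i^{[p^{n_i}-1]},\]
a sequence of length $D^*-1$. Any nonempty subsequence $T$ has $\sigma(T)=\sum_i a_i e_i$ with $0\le a_i\le p^{n_i}-1$ and not all $a_i$ zero; by uniqueness of representation in the direct sum, $\sigma(T)\ne 0$.

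For the upper bound I will use the classical group algebra argument of Olson. Work in $\mathbb F_p[G]$, writing $\bar g$ for the basis element attached to $g\in G$, and let $I$ denote the augmentation ideal. Setting $x_i:=\bar e_i-1$, the characteristic-$p$ Frobenius identity $(1+x_i)^{p^{n_i}}=1+x_i^{p^{n_i}}$ combined with $\bar e_i^{\,p^{n_i}}=\bar 0=1$ forces $x_i^{p^{n_i}}=0$. A dimension count then yields the presentation
\[\mathbb F_p[G]\cong \mathbb F_p[x_1,\ldots,x_r]\big/(x_1^{p^{n_1}},\ldots,x_r^{p^{n_r}}),\]
with $\mathbb F_p$-basis given by the monomials $x_1^{a_1}\cdots x_r^{a_r}$ for $0\le a_i\le p^{n_i}-1$. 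Since the top-degree nonzero monomial has degree $D^*-1$ and $I=(x_1,\ldots,x_r)$, we conclude $I^{D^*}=0$.

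Now given any sequence $S=g_1\bdot\ldots\bdot g_l$ with $l\ge D^*$, each $1-\bar g_i$ lies in $I$, so
\[\prod_{i=1}^l(1-\bar g_i)\in I^l\subseteq I^{D^*}=0.\]
Expanding the product yields $\sum_{T\mid S}(-1)^{|T|}\overline{\sigma(T)}=0$, and extracting the coefficient of $\bar 0$ gives
\[\sum_{\substack{T\mid S\\ \sigma(T)=0}}(-1)^{|T|}\equiv 0\pmod p.\]
The empty subsequence alone contributes $+1$; were there no nonempty zero-sum subsequence, the sum would equal $1$, a contradiction. Hence $\mathsf D(G)\le D^*$.

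The only nontrivial step is the nilpotency claim $I^{D^*}=0$, which rests on the Frobenius identity and is therefore specific to $p$-groups; the analogous nilpotency bound fails for general finite abelian groups, which is part of why $\mathsf D(G)$ is so hard to compute outside the $p$-group case. Everything else reduces to a short algebraic expansion and coefficient extraction.
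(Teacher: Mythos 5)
Your proof is correct: the lower bound via the zero-sum free sequence $\prod_{i=1}^r e_i^{[p^{n_i}-1]}$ is right, and the upper bound is the classical Olson group-algebra argument, with the key nilpotency step $I^{D^*}=0$ justified properly through the Frobenius identity and the presentation $\mathbb F_p[G]\cong\mathbb F_p[x_1,\ldots,x_r]/(x_1^{p^{n_1}},\ldots,x_r^{p^{n_r}})$. The paper itself offers no proof of this lemma --- it is quoted as Theorem 5.5.9 of the cited monograph of Geroldinger and Halter-Koch --- and your argument is essentially the standard proof given there (Olson's theorem), so there is nothing in the paper to contrast it with; your closing remark that the nilpotency bound is special to $p$-groups correctly identifies why the method does not extend to general abelian groups.
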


\begin{lemma}\label{lower}
Let $G$ be a finite abelian group with $\exp(G)=m$, then
$$\mathsf s_{km}(G)\ge km+\mathsf D(G)-1$$
holds for every $k\ge1$.
\end{lemma}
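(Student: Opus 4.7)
The plan is to exhibit an explicit sequence of length $km+\mathsf D(G)-2$ that contains no zero-sum subsequence of length exactly $km$; this immediately forces $\mathsf s_{km}(G)\ge km+\mathsf D(G)-1$ by the definition of $\mathsf s_{km}(G)$.

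The construction uses the Davenport constant. By the definition of $\mathsf D(G)$, there exists a zero-sum free sequence $T$ over $G$ of length $|T|=\mathsf D(G)-1$. I would then form the sequence
\[
S = 0^{[km-1]} \bdot T,
\]
whose length is exactly $(km-1)+(\mathsf D(G)-1)=km+\mathsf D(G)-2$.

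The verification is the key (and essentially only) step. Suppose for contradiction that $W\mid S$ satisfies $|W|=km$ and $\sigma(W)=0$. Write $W=0^{[a]}\bdot T'$ where $T'\mid T$ and $0\le a\le km-1$, so $|T'|=km-a\ge 1$. Then $\sigma(T')=\sigma(W)-a\cdot 0=0$, which exhibits a nonempty zero-sum subsequence of $T$. This contradicts the fact that $T$ is zero-sum free. Hence $S$ has no zero-sum subsequence of length $km$, and the inequality follows.

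There is no real obstacle here: the argument is a one-line counting/construction argument, and the only ingredient beyond the definitions is the existence of a zero-sum free sequence of length $\mathsf D(G)-1$, which is built into the definition of $\mathsf D(G)$. The same template will in fact be used later to derive the lower bound $\mathsf s_{kn}(C_n^r)\ge(k+r)n-r$ already recorded in \eqref{eq1.1}, where one takes $T=\prod_{i=1}^r e_i^{[n-1]}$ as the extremal zero-sum free sequence realizing $\mathsf D(C_n^r)\ge r(n-1)+1$.
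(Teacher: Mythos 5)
Your proof is correct and takes exactly the same route as the paper: both construct $S=T\bdot 0^{[km-1]}$ from a zero-sum free sequence $T$ of length $\mathsf D(G)-1$ and observe that any length-$km$ zero-sum subsequence would force a nonempty zero-sum subsequence of $T$. You merely spell out the verification that the paper dismisses as ``easy to know.''
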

\begin{proof}
By the definition of $\mathsf D(G)$, there exists a zero-sum free sequence $T$ of length $|T|=\mathsf D(G)-1$. Let $S=T\bdot 0^{[km-1]}$. It is easy to know that $S$ is a sequence over $G$ of length $|S|=km+\mathsf D(G)-2$ and $S$ contains no zero-sum subsequence of length $km$. This completes the proof.
\end{proof}

\begin{lemma} \label{lemma1.2}{\rm{(\cite{Gao1}, Theorem 3.2)}}
Let $G$ be a finite abelian $p$-group and $\exp(G)=p^{n_r}$. If $p^{m+n_r}\ge\mathsf D(G)$ for some $m\in\mathbb N$, then
$$\mathsf s_{kp^mp^{n_r}}(G)=k\cdot p^{m+n_r}+\mathsf D(G)-1,$$
holds for any $k\in\mathbb N$.
\end{lemma}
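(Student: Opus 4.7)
The plan is to combine Lemma \ref{lower} for the lower bound with an induction on $k$ for the upper bound. Since $p^{m+n_r}$ is a positive multiple of $\exp(G)=p^{n_r}$, Lemma \ref{lower} yields $\mathsf s_{kp^{m+n_r}}(G)\ge kp^{m+n_r}+\mathsf D(G)-1$ immediately, so the task reduces to establishing the matching upper bound, and the heart of that is the case $k=1$.

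For $k=1$, I would use a \emph{doubling} embedding into a larger $p$-group. Set $G'=G\oplus C_{p^{m+n_r}}$, which is still a finite abelian $p$-group with summand decomposition $C_{p^{n_1}}\oplus\cdots\oplus C_{p^{n_r}}\oplus C_{p^{m+n_r}}$. By Lemma \ref{lemma1.1}, $\mathsf D(G')=\mathsf D(G)+p^{m+n_r}-1$. Given any sequence $S$ over $G$ with $|S|\ge p^{m+n_r}+\mathsf D(G)-1$, select a subsequence $S_0\mid S$ of length exactly $\mathsf D(G')$ and replace each term $g$ of $S_0$ by $(g,1)\in G'$ to form $S_0'\in \mathcal F(G')$. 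By the very definition of $\mathsf D(G')$, the sequence $S_0'$ has a nonempty zero-sum subsequence $T'$; reading the two coordinates separately, the corresponding subsequence $T\mid S_0$ satisfies $\sigma(T)=0$ in $G$ and $|T|\equiv 0\pmod{p^{m+n_r}}$. The hypothesis $\mathsf D(G)\le p^{m+n_r}$ now forces $|S_0|=p^{m+n_r}+\mathsf D(G)-1<2p^{m+n_r}$, so the only positive multiple of $p^{m+n_r}$ that can be attained by $|T|$ is $p^{m+n_r}$ itself, yielding the desired zero-sum subsequence of $S$ of length exactly $p^{m+n_r}$.

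For the inductive step, assume the result for $k-1$ and let $|S|\ge kp^{m+n_r}+\mathsf D(G)-1$. Since $|S|\ge p^{m+n_r}+\mathsf D(G)-1$, the base case extracts a zero-sum subsequence $T_1\mid S$ of length $p^{m+n_r}$; the complementary sequence has length at least $(k-1)p^{m+n_r}+\mathsf D(G)-1$, so the induction hypothesis supplies a zero-sum subsequence $T_2$ of length $(k-1)p^{m+n_r}$ in it, and the concatenation $T_1\bdot T_2$ is a zero-sum subsequence of $S$ of length $kp^{m+n_r}$. The only genuine obstacle in the whole argument is guaranteeing that the base case produces a subsequence of length \emph{exactly} $p^{m+n_r}$ rather than some larger multiple, and this is precisely the role of the hypothesis $p^{m+n_r}\ge\mathsf D(G)$ through the strict inequality $|S_0|<2p^{m+n_r}$; the Davenport-constant computation for $p$-groups and the inductive reduction are otherwise routine.
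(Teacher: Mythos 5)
Your proof is correct. The paper gives no proof of this lemma---it is quoted from Gao (\cite{Gao1}, Theorem 3.2)---but your argument (lower bound from Lemma \ref{lower} applied with $k'=kp^m$; upper bound for $k=1$ by adjoining a $C_{p^{m+n_r}}$ coordinate so that Olson's formula in Lemma \ref{lemma1.1} forces a nonempty zero-sum subsequence whose length is a multiple of $p^{m+n_r}$, with the hypothesis $p^{m+n_r}\ge\mathsf D(G)$ pinning that multiple down to exactly one; then induction on $k$) is precisely the standard argument by which this result is established in the literature, and every step checks out.
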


The following classical result of Alon and Dubiner is crucial in our proof.

\begin{lemma}[\cite{AD}, Theorem 1.1]\label{lemma2.1}
There exists an absolute constant $c>0$ such that
$$\mathsf s(C_n^r)\le (cr\log_2r)^rn.$$
\end{lemma}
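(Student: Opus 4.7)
The plan is to reduce to the case of prime exponent, then to obtain the bound by induction on $r$, using the classical Erd\H{o}s--Ginzburg--Ziv theorem as the base case and the polynomial (Chevalley--Warning) method for the inductive step.

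First I would establish a multiplicativity inequality of the form
$$\mathsf{s}(C_{mn}^r) - 1 \le (\mathsf{s}(C_m^r) - 1) + m\bigl(\mathsf{s}(C_n^r) - 1\bigr),$$
via a standard Harborth-type argument: from a sufficiently long sequence in $C_{mn}^r$, repeatedly extract length-$m$ zero-sum subsequences modulo $C_m^r$, assemble the resulting ``quotient sums'' into an auxiliary sequence in $C_n^r$, and apply the bound for $n$. Iterating along the prime factorization of $n$ reduces the problem to the case $n = p$ prime. The base case $r = 1$ is Erd\H{o}s--Ginzburg--Ziv, giving $\mathsf{s}(C_p) = 2p - 1$.

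For the prime case with $r \ge 2$, I would attack a sequence $S = g_1 \bdot \cdots \bdot g_N$ in $C_p^r$ via the polynomial method: introduce variables $x_1, \ldots, x_N \in \mathbb{F}_p$ and consider the $r + 1$ polynomials
$$P_0(x) = \sum_{i=1}^N x_i^{p-1}, \qquad P_j(x) = \sum_{i=1}^N x_i^{p-1}\,g_i^{(j)} \quad (j = 1, \ldots, r),$$
each of degree $p - 1$. Chevalley--Warning yields a nontrivial common zero whenever $N > (r+1)(p-1)$, and the support of this zero is a zero-sum subsequence of length a positive multiple of $p$. Since this only guarantees length $kp$ for some $k \ge 1$, I would iterate: repeatedly extract disjoint zero-sum subsequences (of length a multiple of $p$ and of controlled size), thereby producing a moderately long auxiliary sequence in $C_p$ whose terms record the ``$k$'' values, then apply Erd\H{o}s--Ginzburg--Ziv to this auxiliary sequence to collapse the union to length exactly $p$.

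The main obstacle will be controlling the constants sufficiently carefully across the induction to achieve the polylogarithmic improvement $cr\log_2 r$ in the base of the exponential, rather than the weaker $r!$ or $2^{O(r\log r)}$ that a naive iteration would yield. The key ingredient I expect to need is to process all $r$ coordinates simultaneously via the polynomial method (rather than projecting and recursing one coordinate at a time), combined with a refined averaging or probabilistic argument that combines the short multiples-of-$p$ zero-sums into a length-$p$ one with sublinear loss per step; this is the delicate part where the $\log r$ savings must be squeezed out.
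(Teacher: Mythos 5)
This statement is not proved in the paper at all: it is imported verbatim as Theorem~1.1 of Alon--Dubiner \cite{AD}, so there is no internal argument to compare against. Judged on its own merits, your sketch assembles the right classical ingredients (Harborth-type submultiplicativity to reduce to prime $n$, Chevalley--Warning, Erd\H{o}s--Ginzburg--Ziv), but it does not prove the stated bound, and one of its concrete steps is broken. The Chevalley--Warning system you write down does give, for $N>(r+1)(p-1)$, a nonempty zero-sum subsequence of length divisible by $p$, and taking a minimal one bounds its length by $(r+1)(p-1)+1$. But the proposed ``collapse'': extracting disjoint zero-sums $T_1,\ldots,T_m$ of lengths $k_1p,\ldots,k_mp$ and applying EGZ to the residues $k_i$ in $C_p$ produces an index set $I$ with $\sum_{i\in I}k_i\equiv 0\pmod p$, hence a zero-sum subsequence of length $p\sum_{i\in I}k_i$, a multiple of $p^2$ --- not a zero-sum of length exactly $p$. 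The standard repair (append a constant coordinate $1$ and work in $C_p^{r+1}$, so that minimality forces length exactly $p$) only succeeds when the minimal support $(r+2)(p-1)+1$ is smaller than $2p$, i.e.\ when $p$ is large compared to $r$; this is precisely the regime already handled by Kubertin and He, and it is the small-prime regime where the Alon--Dubiner theorem is hard.

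The second, and decisive, gap is the one you flag yourself: the mechanism that yields the base $cr\log_2 r$ rather than something like $c^r$ or $r^{O(1)}$ per coordinate is ``expected to be needed'' but never supplied. That mechanism is the actual content of \cite{AD}: a set-expansion lemma for sums of subsets of size exactly $p$, proved via Kneser's theorem and a random-partition argument, combined with an induction on the dimension in which a sequence with no short zero-sum is shown to concentrate near a proper subgroup. None of the tools you list substitutes for this, and a careful execution of your outline would terminate at a bound of the shape $(Cr)^{O(r)}n$ at best, with the argument valid only for $p\gg r$ after the reduction to primes. So the proposal should be regarded as an (honest) plan with the central quantitative step missing, not as a proof of the lemma; for the purposes of this paper the lemma should simply be cited, as the authors do.
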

Although the precise values of $\mathsf s(G)$ for general $C_n^r$ are not known, some cases (when $n$ is a power of a small prime) have been determined. We list some of these results which are very useful in our proof.
\begin{lemma} \label{lemma1.3}
Let $n\in\mathbb N$.
\begin{enumerate}
\item $\mathsf s(C_{2^n}^3)=8\cdot2^n-7$;
\item $\mathsf s(C_{3^n}^3)=9\cdot3^n-8$;
\item $\mathsf s(C_{2^n}^4)=16\cdot2^n-15$;
\item $\mathsf s(C_{3^n}^4)=20\cdot3^n-19$.
\end{enumerate}
\end{lemma}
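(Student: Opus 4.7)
The plan is to establish each of the four identities by pairing a sharp lower-bound construction with an inductive upper bound on $n$, with the case $n=1$ serving as the anchor.

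For the lower bound in each item, I would exhibit a sequence over $C_{p^n}^r$ of length one less than the claimed value that has no zero-sum subsequence of length $p^n$. A uniform recipe is to lift an extremal sequence for $\mathsf{s}(C_p^r)$ through the filtration $C_{p^n}^r\supset pC_{p^n}^r\supset\cdots\supset p^{n-1}C_{p^n}^r$: iterating the construction that witnesses $\mathsf{s}(C_p^r)-1$ yields sequences over $C_{p^n}^r$ of length $\frac{(\mathsf{s}(C_p^r)-1)(p^n-1)}{p-1}$ admitting no zero-sum subsequence of length $p^n$. Substituting the values $\mathsf{s}(C_2^3)=9$, $\mathsf{s}(C_3^3)=19$, $\mathsf{s}(C_2^4)=17$, $\mathsf{s}(C_3^4)=41$ recovers exactly the stated lower bounds $8\cdot 2^n-8$, $9\cdot 3^n-9$, $16\cdot 2^n-16$, $20\cdot 3^n-20$.

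For the upper bound I would induct on $n$. In the base case $n=1$, items (1) and (3) reduce to the pigeonhole identity $\mathsf{s}(C_2^r)=2^r+1$: a zero-sum of length two in $C_2^r$ is just a pair of equal elements, so any zero-sum-of-length-$2$-free sequence is a multiplicity-free subset of $C_2^r$ and has length at most $2^r$. Items (2) and (4) for $n=1$ are the classical values $\mathsf{s}(C_3^3)=19$ (Harborth) and $\mathsf{s}(C_3^4)=41$, which I would import from the literature. For the inductive step, given a sequence $S$ over $C_{p^n}^r$ of length at least $p\cdot\mathsf{s}(C_{p^{n-1}}^r)+\mathsf{s}(C_p^r)-p$, I would project to $C_{p^n}^r/pC_{p^n}^r\cong C_p^r$ and iteratively peel off zero-sum subsequences of length $p$ in the projection; each lifts to a length-$p$ subsequence of $S$ whose sum lies in $pC_{p^n}^r\cong C_{p^{n-1}}^r$. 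After extracting $\mathsf{s}(C_{p^{n-1}}^r)$ such blocks, the resulting sequence of block-sums contains, by induction, a zero-sum subsequence of length $p^{n-1}$, and amalgamating the corresponding $p^{n-1}$ length-$p$ blocks produces a zero-sum subsequence of $S$ of length exactly $p^n$. Solving the recursion $\mathsf{s}(C_{p^n}^r)\le p\cdot\mathsf{s}(C_{p^{n-1}}^r)+\mathsf{s}(C_p^r)-p$ with the indicated base values returns the closed-form expressions in items (1)--(4).

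The principal obstacle is the base case at $p=3$: both $\mathsf{s}(C_3^3)=19$ and $\mathsf{s}(C_3^4)=41$ are considerably deeper than their $p=2$ counterparts and traditionally require delicate combinatorial case analysis or computer-assisted enumeration rather than a clean pigeonhole argument. A secondary subtlety is verifying that the bookkeeping in the inductive extraction wastes no terms, so that the upper bound matches the lower bound exactly rather than merely asymptotically; this demands that one invoke the sharp value of $\mathsf{s}(C_p^r)$, not just an upper estimate.
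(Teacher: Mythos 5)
The paper offers no argument for this lemma at all: its ``proof'' is a list of citations to \cite{EE} and \cite{GHST}. You are therefore attempting something strictly more ambitious, namely a reconstruction of those results modulo the imported base cases. Your upper-bound half does work: the peeling argument through $C_{p^n}^r\to C_{p^n}^r/pC_{p^n}^r\cong C_p^r$ gives the recursion $\mathsf s(C_{p^n}^r)\le p\,\mathsf s(C_{p^{n-1}}^r)+\mathsf s(C_p^r)-p$, and with $\mathsf s(C_2^3)=9$, $\mathsf s(C_3^3)=19$, $\mathsf s(C_2^4)=17$, $\mathsf s(C_3^4)=41$ this closes to the four stated values. (This is essentially the same projection argument the paper uses in its Lemma 3.3.)

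The gap is in your lower bound. Lifting an extremal sequence for $\mathsf s(C_p^r)$ through the filtration $C_{p^n}^r\supset pC_{p^n}^r\supset\cdots\supset p^{n-1}C_{p^n}^r$ and iterating does \emph{not} in general produce a sequence free of zero-sum subsequences of length $p^n$: a length-$p^n$ zero-sum subsequence may mix elements from different filtration levels in ways that the level-wise extremality does not control. The cleanest illustration is already $r=1$, $p=2$, $n=2$: the extremal sequence for $\mathsf s(C_2)$ is $0\bdot 1$, and every filtration-lift of total length $6$ over $C_4$ that the recipe produces (e.g.\ $0^{[3]}\bdot 1^{[2]}\bdot 2$, or $0^{[3]}\bdot 1\bdot 2^{[2]}$) contains a zero-sum subsequence of length $4$ (namely $0\bdot 1\bdot 1\bdot 2$, resp.\ $0\bdot 0\bdot 2\bdot 2$), whereas the genuine extremal sequence $0^{[3]}\bdot 1^{[3]}$ is not of this lifted form. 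The constructions that actually realize the lengths $8\cdot 2^n-8$, $9\cdot 3^n-9$, $16\cdot 2^n-16$ and $20\cdot 3^n-20$ have the shape $\prod_{a\in A}a^{[p^n-1]}$ for a fixed point configuration $A$ ($A=\{0,1\}^3$ or $\{0,1\}^4$ for $p=2$; a suitable $9$-cap in $AG(3,3)$, resp.\ the $20$-cap in $AG(4,3)$, for $p=3$), and verifying that such a sequence has no zero-sum subsequence of length $p^n$ is a configuration-specific computation (Harborth's argument for $p=2$, the cap arguments of \cite{EE} for $p=3$); it is not a formal consequence of the value of $\mathsf s(C_p^r)$ alone. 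So the lower bound must either be re-proved along those lines or, like the base cases, imported from the cited papers.
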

\begin{proof}
(1) See \cite{EE}, Corollary 4.4. (2) See \cite{GHST}, Theorem 1.7. (3) See \cite{EE}, Corollary 4.4. (4) See \cite{EE}, Theorem 1.3, 1.4 and Section 5.
\end{proof}

In the rest of this section, we provide some results about $\mathsf M(n)$ which are useful in this paper. Recall that, for any $n\in \mathbb N$, let
$$\mathsf M(n)=\max\{p^k\text{ }|\text{ with }p^k|n\text{ where $p$ is a prime and k}\in \mathbb N\}$$ be the largest prime power divisor of $n$. For convenience, let $\mathsf M(1)=1$. For example, we have $\mathsf M(20)=\mathsf M(2^25)=5$, $\mathsf M(40)=\mathsf M(2^35)=2^3$ and $\mathsf M(200)=\mathsf M(2^35^2)=5^2$. Unlike the widely studied largest prime divisor function
$$\mathsf P(n)=\max\{p\text{ }|\text{ with }p|n\text{ and $p$ is a prime}\},$$ as far as we know, $\mathsf M(n)$ has not received much attention. As $\mathsf M(p)=p$ where $p$ is a prime, certainly we have $\limsup\limits_{n\rightarrow\infty}\frac{\mathsf M(n)}{n}=1$. It is known and easy to prove that
\begin{equation}\label{eq2}
\liminf\limits_{n\rightarrow\infty}\frac{\mathsf M(n)}{\ln n}=1,
\end{equation}
consequently
\begin{equation}\label{eq3}
\lim\limits_{n\rightarrow\infty}\mathsf M(n)=\infty.
\end{equation}
Recently, Girard \cite{Gi} used (\ref{eq3}) to show that $\mathsf D(C_n^r)=rn+o(n)$, which is an important result in zero-sum theorey and also can be regarded as an example of application of $\mathsf M(n)$. In this paper, we will continue to employ the estimates of $\mathsf M(n)$ to the zero-sum problems. Although the proof of (\ref{eq2}) is simple and elementary, it is hard to find this result in literatures or standard textbooks. So we decide to provide a proof here for the convenience of the reader.

Let
$$\pi(x)=\#\{p\text{ $|$ }p\le x\}$$
be the prime-counting function and
$$\vartheta(x)=\sum_{p\le x}\ln p$$ the Chebyshev $\vartheta$ function. The result in the following lemma is very classical and can be easily found in \cite{T}.

\begin{lemma}\label{lemma3.1}For any $x\ge 2$, we have
$$\pi(x)\le 2\frac{x}{\ln x}.$$
\end{lemma}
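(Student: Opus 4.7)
The plan is to follow the classical Chebyshev argument based on central binomial coefficients, which is the standard route to such an estimate. The starting point is the observation that every prime $p$ with $n < p \le 2n$ divides $(2n)!$ exactly once and does not divide $(n!)^2$ at all, so $\prod_{n<p\le 2n} p$ divides $\binom{2n}{n}$. Combined with the trivial upper bound $\binom{2n}{n} \le 4^n$ from the binomial theorem, taking logarithms gives the key inequality
\[
\vartheta(2n) - \vartheta(n) \;\le\; n\ln 4.
\]

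Next I would telescope this inequality along the dyadic chain $x, x/2, x/4, \ldots$ to obtain the Chebyshev-type bound $\vartheta(x) \le (2\ln 2)\,x$ valid for all $x \ge 1$. To convert this $\vartheta$-estimate into a $\pi$-estimate, I would introduce a cutoff parameter $y \in (1,x)$ and split
\[
\pi(x) \;=\; \pi(y) + \sum_{y < p \le x} 1 \;\le\; y + \frac{1}{\ln y}\sum_{y < p \le x}\ln p \;\le\; y + \frac{\vartheta(x)}{\ln y}.
\]
Choosing, say, $y = x/(\ln x)^2$, the right-hand side becomes $\frac{x}{(\ln x)^2} + \frac{(2\ln 2)\,x}{\ln x - 2\ln\ln x}$, which is asymptotically $(2\ln 2 + o(1))\,x/\ln x$.

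Since $2\ln 2 \approx 1.386$ is strictly less than $2$, this already yields $\pi(x) \le 2x/\ln x$ for every $x$ beyond some explicit threshold $x_0$. The only real obstacle is uniformity down to $x=2$; I would dispose of the finite range $2 \le x \le x_0$ by direct verification, exploiting the fact that $2x/\ln x$ exceeds $\pi(x)$ with considerable slack at small values (for instance $\pi(2) = 1$ versus $4/\ln 2 \approx 5.77$). The comfortable gap $2 - 2\ln 2 > 0.6$ keeps $x_0$ small enough to handle by inspection, so no further combinatorial input is needed beyond the Chebyshev-type inequality established above.
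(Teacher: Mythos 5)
Your argument is essentially correct, but it is worth noting that the paper does not prove this lemma at all: its ``proof'' is a one-line citation to Theorem 3, page 11 of Tenenbaum's book \cite{T}, which records precisely the Chebyshev-type bounds you are rederiving. So your proposal supplies a self-contained elementary proof where the paper outsources the work. The chain of ideas you use --- $\prod_{n<p\le 2n}p \mid \binom{2n}{n}\le 4^n$, hence $\vartheta(2n)-\vartheta(n)\le n\ln 4$, dyadic telescoping to $\vartheta(x)\le (2\ln 2)x$, and then the cutoff $\pi(x)\le y+\vartheta(x)/\ln y$ --- is the standard Chebyshev route and each step is sound (modulo the usual small care needed to pass from integer $n$ to real $x$ in the telescoping without degrading the constant).

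The one soft spot is your closing claim that the residual finite range is ``small enough to handle by inspection.'' With your choice $y=x/(\ln x)^2$, the inequality $\tfrac{1}{\ln x}+\tfrac{(2\ln 2)\ln x}{\ln x-2\ln\ln x}\le 2$ only kicks in around $\ln x\approx 21$, i.e.\ $x_0\approx 10^9$; the loss from $\pi(y)\le y$ and from replacing $\ln y$ by $\ln x-2\ln\ln x$ consumes most of the gap $2-2\ln 2\approx 0.61$ at moderate $x$. The result is still true on $[2,x_0]$ (it is a classical explicit bound), but closing that range honestly requires either an appeal to known numerical verification, a sharper elementary bound such as those of Rosser--Schoenfeld, or a partial-summation version of your last step, $\pi(x)=\vartheta(x)/\ln x+\int_2^x \vartheta(t)\,t^{-1}(\ln t)^{-2}\,dt+O(1)$, which brings the threshold down to roughly $10^4$. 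This is a fixable quantitative detail rather than a gap in the method.
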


\begin{proof}
This result is an easy consequence of Theorem 3, Page 11 in \cite{T}.
\end{proof}

\begin{lemma}\label{lemma3.2}
For any $n\in \mathbb N$, we have
$$\mathsf M(n)\ge \frac{1}{2}\ln n.$$
\end{lemma}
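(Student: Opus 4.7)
The plan is to bound $n$ from above in terms of $\mathsf M(n)$ and $\pi(\mathsf M(n))$, and then apply Lemma \ref{lemma3.1} to convert this into the desired logarithmic lower bound.

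First I would dispose of the trivial case $n=1$, where $\mathsf M(1)=1\ge \frac{1}{2}\ln 1 = 0$. From here on assume $n\ge 2$ and write $n=p_1^{a_1}\cdots p_s^{a_s}$ with distinct primes $p_1,\ldots,p_s$ and $a_i\in\mathbb N$. Set $M=\mathsf M(n)=\max_i p_i^{a_i}$; note $M\ge 2$. By the definition of $M$, every prime power divisor $p_i^{a_i}$ satisfies $p_i^{a_i}\le M$, and in particular every prime $p_i$ dividing $n$ satisfies $p_i\le M$. Therefore the number of distinct prime divisors of $n$ is bounded by $s\le \pi(M)$.

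The key observation is then
\[
n=\prod_{i=1}^{s}p_i^{a_i}\le M^{s}\le M^{\pi(M)}.
\]
Taking logarithms and applying Lemma \ref{lemma3.1} to $x=M\ge 2$ yields
\[
\ln n \le \pi(M)\ln M \le \frac{2M}{\ln M}\cdot \ln M = 2M,
\]
and dividing by $2$ gives the claimed inequality $M\ge \frac{1}{2}\ln n$.

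There is no real obstacle here; the only thing to watch is the applicability of Lemma \ref{lemma3.1}, which requires $M\ge 2$, handled by treating $n=1$ separately. The argument is essentially the elementary bound $n\le \mathsf M(n)^{\omega(n)}$ combined with the Chebyshev-type estimate for $\pi$, and it also explains why the constant $\tfrac{1}{2}$ appears (it is inherited from the constant $2$ in Lemma \ref{lemma3.1}, so any improvement of that constant would carry over directly).
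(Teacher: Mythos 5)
Your proof is correct and follows essentially the same route as the paper: both establish the key bound $n\le \mathsf M(n)^{\pi(\mathsf M(n))}$ and then apply Lemma \ref{lemma3.1}. Your derivation of that bound (every prime dividing $n$ is at most $\mathsf M(n)$, hence $\omega(n)\le\pi(\mathsf M(n))$) is in fact a cleaner version of the paper's slightly more roundabout argument comparing the $k$-th prime with $p$.
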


\begin{proof}
When $n=p^m$ is a prime power, the result is obvious. If $n$ is not a prime power, we may assume that
$$n=q_1^{r_1}\cdots q_k^{r_k}p^m,$$
where $q_1<\cdots< q_k$ and $p$ are distinct prime numbers, $r_1,\ldots,r_k,m\in\mathbb N$ with $\mathsf M(n)=p^m$. By the definition of $\mathsf M(n)$, clearly we have $n\le p^mp^{km}$. Moreover we have $k< \pi(p^m)$. For otherwise if $k\ge\pi(p^m)$, then we have $$k=\pi(p_k)\ge\pi(p^m)\ge\pi(p)$$
and consequently $p_k\ge p$. As $q_1<\cdots< q_k$ and $p$ are distinct prime numbers, we have $q_k>p_k$. Therefore,
$$\pi(q_k)>\pi(p_k)=k\ge\pi(p^m)$$
and consequently $q_k>p^m$. By the definition of $\mathsf M(n)$, we have $\mathsf M(n)\ge q_k^{r_k}>p^m$, but this contradicts $\mathsf M(n)=p^m$. Therefore $n\le p^{m\pi(p^m)}$, and by Lemma \ref{lemma3.1} we have
$$\frac{\mathsf M(n)}{\ln n}=\frac{p^m}{\ln n}\ge \frac{p^m}{\ln p^{m\pi(p^m)}}
=\frac{p^m}{{\pi(p^m)\ln p^m}}\ge \frac{1}{2}.$$
This completes the proof.
\end{proof}

For sufficiently large number $n$, $\mathsf P(n)$ may be rather small, for example $\mathsf P(2^m)=2$ for any $m\in \mathbb N$. However, Lemma \ref{lemma3.2} means that $\mathsf M(n)$ cannot be too small for sufficiently large $n$. We shall use Lemma \ref{lemma3.2} to prove our main results in the next section. In the following, we will prove (\ref{eq2}) which shows that actually $\ln n$ is the minimal order of $\mathsf M(n)$.

Let $p_k$ denote the $k$-th prime and $n_k=p_1\cdots p_k\in \mathbb N$. Clearly, we have $\mathsf M(n_k)=p_k$ and $\ln n_k=\vartheta(p_k)$. By the Prime Number Theorem, for any $\epsilon>0$ there exists $k_0(\epsilon)>0$ such that for all $k>k_0(\epsilon)$ we have
$$\frac{\mathsf M(n_k)}{\ln n_k}=\frac{p_k}{\vartheta(p_k)}\le (1+\epsilon).$$
Therefore we have $\liminf\limits_{n\rightarrow\infty}\frac{\mathsf M(n)}{\ln n}\le1$.

Similarly, from Lemma \ref{lemma3.2}, together with the Prime Number Theorem, for any $\epsilon>0$ there exists $n_0(\epsilon)>0$ such that for all $n>n_0(\epsilon)$ we have
$$\frac{\mathsf M(n)}{\ln n}\ge\frac{p^m}{{\pi(p^m)\ln p^m}}\ge (1-\epsilon).$$
Therefore we have $\liminf\limits_{n\rightarrow\infty}\frac{\mathsf M(n)}{\ln n}\ge1$ and
$$\liminf\limits_{n\rightarrow\infty}\frac{\mathsf M(n)}{\ln n}=1.$$
This completes the proof of (\ref{eq2}).

\section{Proof of the main results}

In this section, we shall prove our main results, Theorem \ref{theorem1}. Firstly, we have to verify Conjecture \ref{conj1} for some small primes which are the remaining cases in \cite{GaoThang,HZ,K}.

\begin{lemma}\label{lemma4.1}
For any $n\in\mathbb N$, we have
\begin{enumerate}
\item $\mathsf s_{k2^n}(C_{2^n}^3)=(k+3)2^n-3$, holds for $k\ge 4$;
\item $\mathsf s_{k3^n}(C_{3^n}^3)=(k+3)3^n-3$, holds for $k\ge 6$;
\item $\mathsf s_{k2^n}(C_{2^n}^4)=(k+4)2^n-4$, holds for $k\ge 12$;
\item $\mathsf s_{k3^n}(C_{3^n}^4)=(k+4)3^n-4$, holds for $k\ge 18$.
\end{enumerate}
\end{lemma}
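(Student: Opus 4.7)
The plan is to prove matching lower and upper bounds in all four cases using the same two-step scheme.

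\emph{Lower bound.} Since $C_{p^n}^r$ is a $p$-group, Lemma \ref{lemma1.1} gives $\mathsf D(C_{p^n}^r) = r(p^n-1)+1$. Combined with Lemma \ref{lower}, this yields $\mathsf s_{kp^n}(C_{p^n}^r) \ge (k+r) p^n - r$ for every $k \ge 1$, providing the claimed lower bound in all four cases.

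\emph{Upper bound, base case.} For each case I would apply Lemma \ref{lemma1.2} with the smallest $m \in \mathbb N$ satisfying $p^{m+n} \ge \mathsf D(C_{p^n}^r)$, namely $m = 2$ in cases (1), (3), (4) and $m = 1$ in case (2), together with an auxiliary factor $k' \in \mathbb N$ chosen so that $k_0 := k' p^m$ matches the stated threshold. With $(k',m) = (1,2), (2,1), (3,2), (2,2)$ respectively, Lemma \ref{lemma1.2} directly yields
\[
\mathsf s_{k_0 p^n}(C_{p^n}^r) \;=\; k_0 p^n + \mathsf D(C_{p^n}^r) - 1 \;=\; (k_0 + r) p^n - r
\]
at $k_0 = 4, 6, 12, 18$ respectively.

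\emph{Upper bound, inductive step.} I next prove the monotonicity claim: if the identity $\mathsf s_{kp^n}(G) = (k+r)p^n - r$ is known for some $k$ and the inequality $(k+1)p^n + \mathsf D(G) - 1 \ge \mathsf s(G)$ holds, then the identity also holds for $k+1$. Given a sequence $S$ over $G$ of length $(k+1)p^n + \mathsf D(G) - 1 \ge \mathsf s(G)$, the length hypothesis and the definition of $\mathsf s(G)$ supply a zero-sum subsequence $T_0 \mid S$ of length $p^n$; the sequence obtained from $S$ by deleting $T_0$ has length exactly $kp^n + \mathsf D(G) - 1 = \mathsf s_{kp^n}(G)$ and hence contains a zero-sum subsequence $T_1$ of length $kp^n$, so $T_0 \cdot T_1$ is the required zero-sum subsequence of $S$ of length $(k+1)p^n$. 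Substituting the values from Lemma \ref{lemma1.3}, the length condition becomes $k \ge 4 - 4/2^n$, $k \ge 5 - 5/3^n$, $k \ge 11 - 11/2^n$, $k \ge 15 - 15/3^n$ in the four cases, each strictly less than the corresponding $k_0 \in \{4, 6, 12, 18\}$. Hence starting from the base case, induction extends the identity to every $k \ge k_0$.

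\emph{Main obstacle.} The delicate point is synchronising the base case, which Lemma \ref{lemma1.2} produces only at multiples of $p^m$, with the cost of the inductive step, which demands $|S| \ge \mathsf s(G)$. Both constraints must be met simultaneously at the starting value of $k$, and it is exactly this interplay that pins down the thresholds $4, 6, 12, 18$ rather than Kubertin's conjectural $k_0 = r$. Lowering them further would need either a sharper base case (a smaller admissible $m$) or an inductive argument that extracts more than one short zero-sum per step.
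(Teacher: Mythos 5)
Your proposal is correct and follows essentially the same route as the paper: the lower bound via Lemma \ref{lower} (with $\mathsf D$ from Lemma \ref{lemma1.1}), the base cases $k_0=4,6,12,18$ via Lemma \ref{lemma1.2} with exactly the parameters $(k',m)$ you list, and the induction step that peels off a length-$p^n$ zero-sum subsequence using the exact values of $\mathsf s(C_{p^n}^r)$ from Lemma \ref{lemma1.3}. Your computed length thresholds $k\ge 4-4/2^n$, $5-5/3^n$, $11-11/2^n$, $15-15/3^n$ are correct, and your closing remark accurately identifies why the starting values are forced to be the least admissible multiples of $p^m$.
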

\begin{proof}
By Lemma \ref{lower}, it suffices to prove $\mathsf s_{k\exp(G)}\le k\exp(G)+\mathsf D(G)-1$.

(1) We prove by induction on $k$. Since $2^{2+n}=4\cdot 2^n\ge\mathsf D(C_{2^n}^3)=3\cdot2^n-2$, by Lemma \ref{lemma1.2} we have
$\mathsf s_{4\cdot 2^n}(C_{2^n}^3)=7\cdot2^n-3$. This proves the case $k=4$. Suppose that $k\ge 5$ and the result holds for all positive integers $n$ with $4\le n\le k$. Now we need to prove $\mathsf s_{(k+1)2^n}(C_{2^n}^3)=(k+1+3)2^n-3$.

Let $S$ be any sequence over $C_{2^n}^3$ of length $$|S|=(k+1+3)2^n-3.$$
By Lemma \ref{lemma1.3}.(1) and the fact that $|S|\ge 8\cdot2^n-7$, we have $S$ contains a zero-sum subsequence $T$ of length $|T|=2^n$. Since
$$|S\bdot T^{-1}|=(k+3)2^n-3=\mathsf s_{k 2^n}(C_{2^n}^3),$$ we have $S\bdot T^{-1}$ contains a zero-sum subsequence $U$ of length $|U|=k2^n$. Consequently, $T\bdot U$ is a zero-sum subsequence of $S$ of length $|T\bdot U|=(k+1)2^n$. This completes the proof.

(2) We prove by induction on $k$. Since $3^{1+n}=3\cdot 3^n\ge\mathsf D(C_{3^n}^3)=3\cdot3^n-2$, by Lemma \ref{lemma1.2} we have
$\mathsf s_{6\cdot 3^n}(C_{3^n}^3)=9\cdot3^n-3$. This proves the case $k=6$. Suppose that $k\ge 7$ and the result holds for all positive integers $n$ with $6\le n\le k$. Now we need to prove $\mathsf s_{(k+1)3^n}(C_{3^n}^3)=(k+1+3)3^n-3$.

Let $S$ be any sequence over $C_{3^n}^3$ of length $$|S|=(k+1+3)3^n-3.$$
By Lemma \ref{lemma1.3}.(2) and the fact that $|S|\ge 9\cdot3^n-8$, we have $S$ contains a zero-sum subsequence $T$ of length $|T|=3^n$. Since
$$|S\bdot T^{-1}|=(k+3)3^n-3=\mathsf s_{k 3^n}(C_{3^n}^3),$$ we have $S\bdot T^{-1}$ contains a zero-sum subsequence $U$ of length $|U|=k3^n$. Consequently, $T\bdot U$ is a zero-sum subsequence of $S$ of length $|T\bdot U|=(k+1)3^n$. This completes the proof.

(3) We prove by induction on $k$. Since $2^{2+n}=4\cdot 2^n\ge\mathsf D(C_{2^n}^4)=4\cdot2^n-3$, by Lemma \ref{lemma1.2} we have
$\mathsf s_{12\cdot 2^n}(C_{2^n}^4)=16\cdot2^n-4$. This proves the case $k=12$. Suppose that $k\ge 13$ and the result holds for all positive integers $n$ with $12\le n\le k$. Now we need to prove $\mathsf s_{(k+1)2^n}(C_{2^n}^4)=(k+1+4)2^n-4$.

Let $S$ be any sequence over $C_{2^n}^4$ of length $$|S|=(k+1+4)2^n-4.$$
By Lemma \ref{lemma1.3}.(3) and the fact that $|S|\ge 16\cdot2^n-15$, we have $S$ contains a zero-sum subsequence $T$ of length $|T|=2^n$. Since
$$|S\bdot T^{-1}|=(k+4)2^n-4=\mathsf s_{k 2^n}(C_{2^n}^4),$$ we have $S\bdot T^{-1}$ contains a zero-sum subsequence $U$ of length $|U|=k2^n$. Consequently, $T\bdot U$ is a zero-sum subsequence of $S$ of length $|T\bdot U|=(k+1)2^n$. This completes the proof.

(4) We prove by induction on $k$. Since $3^{2+n}=9\cdot 3^n\ge\mathsf D(C_{3^n}^4)=4\cdot3^n-4$, by Lemma \ref{lemma1.2} we have
$\mathsf s_{18\cdot 3^n}(C_{3^n}^4)=22\cdot3^n-4$. This proves the case $k=18$. Suppose that $k\ge 19$ and the result holds for all positive integers $n$ with $18\le n\le k$. Now we need to prove $\mathsf s_{(k+1)3^n}(C_{3^n}^4)=(k+1+4)3^n-4$.

Let $S$ be any sequence over $C_{3^n}^4$ of length $$|S|=(k+1+4)3^n-4.$$
By Lemma \ref{lemma1.3}.(4) and the fact that $|S|\ge 20\cdot3^n-19$, we have $S$ contains a zero-sum subsequence $T$ of length $|T|=3^n$. Since
$$|S\bdot T^{-1}|=(k+4)3^n-4=\mathsf s_{k 3^n}(C_{3^n}^4),$$ we have $S\bdot T^{-1}$ contains a zero-sum subsequence $U$ of length $|U|=k3^n$. Consequently, $T\bdot U$ is a zero-sum subsequence of $S$ of length $|T\bdot U|=(k+1)3^n$. This completes the proof.
\end{proof}

\begin{corollary}\label{coro2}
Let $n,m\in\mathbb N$ and $p$ be any prime. we have
\begin{enumerate}
\item $\mathsf s_{kp^m}(C_{p^m}^3)=(k+3)p^m-3$ holds for $k\ge6$;
\item $\mathsf s_{kp^m}(C_{p^m}^4)=(k+4)p^m-4$ holds for $k\ge18$;
\end{enumerate}
\end{corollary}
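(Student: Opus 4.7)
The plan is to observe that Corollary \ref{coro2} is nothing more than a consolidation of Lemma \ref{lemma4.1} (which settles the small-prime cases $p=2,3$) with the previously known instances of Kubertin's Conjecture \ref{conj1} for larger primes (settled in \cite{GaoThang,HZ,K} and quoted in the introduction). The proof naturally splits into three cases according to $p\in\{2\},\{3\},$ or $p\ge 5$. By Lemma \ref{lower}, in each case only the upper bound $\mathsf s_{kp^m}(C_{p^m}^r)\le (k+r)p^m-r$ requires verification.

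For $p=2$: part (1) of the corollary follows directly from Lemma \ref{lemma4.1}.(1), since the hypothesis $k\ge 6$ certainly implies the weaker hypothesis $k\ge 4$ appearing there; part (2) follows from Lemma \ref{lemma4.1}.(3), since $k\ge 18$ implies $k\ge 12$. For $p=3$: parts (1) and (2) are literally the statements of Lemma \ref{lemma4.1}.(2) and Lemma \ref{lemma4.1}.(4), respectively, with matching $k$-thresholds.

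For $p\ge 5$: the introduction records that Conjecture \ref{conj1} has been verified for all $r\le 4$ whenever $p\ge 5$, via the works \cite{GaoThang,HZ,K}. This verification yields the equality $\mathsf s_{kp^m}(C_{p^m}^r)=(k+r)p^m-r$ for every $k\ge r$, so the hypotheses $k\ge 6\ge 3$ (for $r=3$) and $k\ge 18\ge 4$ (for $r=4$) are comfortably sufficient.

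I do not anticipate a genuine obstacle here: the corollary is essentially a bookkeeping step. The only conceptual point worth highlighting is why the thresholds $k\ge 6$ and $k\ge 18$ appear in the uniform statement despite being overkill for $p\ge 5$ and for $p=2$. The reason is that these are exactly the thresholds forced by the $p=3$ instances of Lemma \ref{lemma4.1}, whose base cases $k=6$ (rank $3$) and $k=18$ (rank $4$) come from applying Lemma \ref{lemma1.2} with the smallest $m$ for which $3^{m+n}\ge \mathsf D(C_{3^n}^r)$. Thus $p=3$ is the bottleneck, and the three-case argument above suffices.
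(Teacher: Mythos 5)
Your proposal is correct and follows essentially the same route as the paper: the paper likewise disposes of $p=2,3$ by invoking Lemma \ref{lemma4.1} (with the same observation that the stated thresholds $k\ge 6$ and $k\ge 18$ subsume the weaker ones needed for $p=2$) and handles $p\ge 5$ by citing the known cases of Conjecture \ref{conj1} from \cite{K} and \cite{HZ}. Your extra remark identifying $p=3$ as the source of the uniform thresholds is accurate but not needed for the argument.
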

\begin{proof} By Lemma \ref{lemma4.1}, the results hold for the cases $p=2,3$. For $p\ge 5$, see Theorem 1.(3) in \cite{K} and Theorem 1.2.(3) in \cite{HZ}.
\end{proof}

The following crucial lemma is based on a standard argument in zero-sum theory (we refer to \cite{GH}, Proposition 5.7.11).
\begin{lemma}\label{lemma4.2}
Let $n,m,p,k,r\in \mathbb N$ and $p$ a prime. Assume that $\mathsf s_{kp^m}(C_{p^m}^r)=(k+r)p^m-r$.
Then we have
$$\mathsf s_{knp^m}(C_{np^m}^r)\le(k+r)np^m+a_rn,$$
where $a_r$ is a constant depends on $r$.
\end{lemma}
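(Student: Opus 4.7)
The plan is to run a standard two-stage extraction that reduces the problem on $C_{np^m}^r$ to one Erd\H{o}s-Ginzburg-Ziv step over $C_n^r$ (handled uniformly in $n$ by the Alon-Dubiner bound, Lemma \ref{lemma2.1}) followed by one application of the hypothesis on $C_{p^m}^r$. The key observation is that coordinate-wise reduction modulo $n$ yields a surjection $\pi:C_{np^m}^r\to C_n^r$ whose kernel $nC_{np^m}^r$ is canonically isomorphic to $C_{p^m}^r$; consequently, any subsequence of length $n$ whose $\pi$-image is zero-sum has its sum lying in this fixed copy of $C_{p^m}^r$ inside $C_{np^m}^r$.

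Given a sequence $S$ over $C_{np^m}^r$ of length at least $(k+r)np^m+a_rn$, with $a_r$ to be chosen, Stage 1 greedily extracts pairwise disjoint subsequences $T_1,\dots,T_N$ of $S$, each of length exactly $n$, whose images under $\pi$ are zero-sum in $C_n^r$. Such an extraction is possible as long as the remaining part of $S$ has length at least $\mathsf s(C_n^r)$, which by Lemma \ref{lemma2.1} is at most $(cr\log_2 r)^r n$ for an absolute constant $c>0$. Taking $N=(k+r)p^m-r=\mathsf s_{kp^m}(C_{p^m}^r)$ (by hypothesis), the extraction succeeds provided $|S|\ge(N-1)n+\mathsf s(C_n^r)$, which is ensured by the choice
$$a_r=(cr\log_2 r)^r-r-1.$$

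In Stage 2, the sums $\sigma(T_1),\dots,\sigma(T_N)$ all lie in $nC_{np^m}^r\cong C_{p^m}^r$ and thus form a sequence in $C_{p^m}^r$ of length exactly $\mathsf s_{kp^m}(C_{p^m}^r)$. The hypothesis then supplies a sub-collection of $kp^m$ of these sums that vanishes in $C_{p^m}^r$, hence also in $C_{np^m}^r$. Concatenating the corresponding $T_i$'s yields a zero-sum subsequence of $S$ of length $kp^m\cdot n=knp^m$, as required. The argument is routine bookkeeping once the reduction is set up; the only substantive ingredient is the Alon-Dubiner theorem, and it is essential precisely because no arithmetic condition is placed on $n$, so $\mathsf s(C_n^r)$ must be controlled by a bound depending only on $r$ (times $n$). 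Any worry that the quotient $C_n^r$ might be intractable is resolved uniformly by Lemma \ref{lemma2.1}.
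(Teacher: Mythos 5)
Your proof is correct and follows essentially the same route as the paper: project $C_{np^m}^r\to C_n^r$, greedily extract $(k+r)p^m-r$ disjoint length-$n$ blocks with zero-sum image using the Alon--Dubiner bound, note the block sums lie in the kernel $\cong C_{p^m}^r$, and apply the hypothesis there. The only (harmless) difference is your slightly sharper count $|S|\ge(N-1)n+\mathsf s(C_n^r)$, giving $a_r=(cr\log_2 r)^r-r-1$ instead of the paper's $(cr\log_2 r)^r-r$.
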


\begin{proof}
Let $S$ be a sequence of length $|S|=((k+r)p^m-r)n+\mathsf s(C_n^r)$ over $C_n^r$.
Consider the following map:
$$\varphi:C_{np^m}^r\rightarrow C_n^r.$$
Then $\varphi(S)$ is a sequence over $C_n^r$ of length $((k+r)p^m-r)n+\mathsf s(C_n^r)$. By the definition of $\mathsf s(C_n^r)$, we have $\varphi(S)$ contains at least $(k+r)p^m-r$ zero-sum subsequences $S_1,\ldots,S_{(k+r)p^m-r}$ over $C_n^r$ with $|S_i|=n$ for $1\le i\le (k+r)p^m-r$. This means that $$\sigma(S_1),\ldots,\sigma(S_{(k+r)p^m-r})\in \ker(\varphi)=C_{p^m}^r.$$
By the assumption that $\mathsf s_{kp^m}(C_{p^m}^r)=(k+r)p^m-r$, there exist $$\{i_1,\ldots,i_{kp^m}\}\subset\{1,\ldots,(k+r)p^m-r\}$$ such that $\sigma(S_{i_1})+\ldots+\sigma(S_{i_{kp^m}})=0$ and this implies that $S_{i_1}\bdot\ldots\bdot S_{i_{kp^m}}$ is a zero-sum subsequence of $S$ over $C_{np^m}^r$ of length $knp^m$. Therefore
$$\mathsf s_{knp^m}(C_{np^m}^r)\le ((k+r)p^m-r)n+\mathsf s(C_n^r).$$
Moreover, by Lemma \ref{lemma2.1}, there exists an absolute constant $c$ such that
$$((k+r)p^m-r)n+\mathsf s(C_n^r)\le ((k+r)p^m-r)n+(cr\log_2r)^rn.$$
Let $a_r=(cr\log_2r)^r-r$, then we have the desired result.
\end{proof}
For any fixed $r\in\mathbb N$, we denote $a_r=(cr\log_2r)^r-r$, where $c$ is the absolute constant mentioned in Lemma \ref{lemma2.1}. The following corollary is an easy consequence of the above lemma.
\begin{corollary}\label{coro3}
Let $n,k,r\in \mathbb N$. Assume that $\mathsf M(n)=p^m$ and
$$\mathsf s_{kp^m}(C_{p^m}^r)=(k+r)p^m-r.$$
Then we have
$$\mathsf s_{kn}(C_n^r)\le (k+r)n+a_r\frac{n}{\mathsf M(n)}.$$
\end{corollary}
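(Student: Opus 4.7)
The plan is to derive Corollary \ref{coro3} as an immediate specialization of Lemma \ref{lemma4.2}. Since $\mathsf{M}(n) = p^m$ divides $n$ by definition, we may factor $n = n' \cdot p^m$, where $n' = n/\mathsf{M}(n)$ is a positive integer. The hypothesis of the corollary, namely $\mathsf{s}_{kp^m}(C_{p^m}^r) = (k+r)p^m - r$, is precisely the hypothesis appearing in Lemma \ref{lemma4.2}, so there is nothing to check there; we only need to feed the lemma the correct parameters.

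Concretely, I would apply Lemma \ref{lemma4.2} with $n'$ playing the role of the variable ``$n$'' of that lemma (and the same $p$, $m$, $k$, $r$). The conclusion reads
$$\mathsf{s}_{kn'p^m}(C_{n'p^m}^r) \;\leq\; (k+r)\,n'p^m \,+\, a_r\,n'.$$
Since $n'p^m = n$ and $n' = n/\mathsf{M}(n)$, the left-hand side is $\mathsf{s}_{kn}(C_n^r)$ and the right-hand side is exactly $(k+r)n + a_r \cdot \frac{n}{\mathsf{M}(n)}$, which is the desired bound.

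Because the proof is a one-line substitution, there is no real obstacle. The only thing to be mindful of is notational: the symbol $n$ in the statement of Lemma \ref{lemma4.2} is a free variable, whereas in the corollary $n$ is fixed, so I would use a different letter (e.g.\ $n'$) when invoking the lemma to avoid any clash. The purpose of this corollary in the larger argument is then transparent: combined with the estimate $\mathsf{M}(n) \geq \tfrac{1}{2}\ln n$ of Lemma \ref{lemma3.2}, it converts the inequality into $\mathsf{s}_{kn}(C_n^r) \leq (k+r)n + O_r(n/\ln n)$ whenever the relevant prime-power base case $\mathsf{s}_{kp^m}(C_{p^m}^r) = (k+r)p^m - r$ is available (as furnished by Lemma \ref{lemma4.1} and Corollary \ref{coro2}), which is the form needed to deduce Theorem \ref{theorem1}.
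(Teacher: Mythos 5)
Your proposal is correct and is exactly the argument the paper intends: the paper gives no explicit proof, calling the corollary ``an easy consequence'' of Lemma \ref{lemma4.2}, and your substitution of $n'=n/\mathsf M(n)$ for the lemma's variable is precisely that one-line specialization. The bookkeeping ($n'p^m=n$, $a_rn'=a_r n/\mathsf M(n)$) checks out, so there is nothing to add.
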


By Corollary \ref{coro3}, in order to prove the main results, it suffices to combine the results about $\mathsf M(n)$ in Section 2.
\medskip

{\sl Proof of the Theorem \ref{theorem1}.}
(1) By Corollary \ref{coro2}.(1) and \ref{coro3}, for $k\ge 6$, we have
$$\mathsf s_{kn}(C_n^3)\le(k+3)n+a_3\frac{n}{\mathsf M(n)}.$$
By lemma \ref{lemma3.2}, for $k\ge 6$, actually we have
$$\mathsf s_{kn}(C_n^3)\le(k+3)n+2a_3\frac{n}{\ln n}$$
and we get the desired result.

(2) By Corollary \ref{coro2}.(2) and \ref{coro3}, for $k\ge 18$, we have
$$\mathsf s_{kn}(C_n^4)\le(k+4)n+a_4\frac{n}{\mathsf M(n)}.$$
By lemma \ref{lemma3.2}, for $k\ge 18$, actually we have
$$\mathsf s_{kn}(C_n^4)\le(k+4)n+2a_4\frac{n}{\ln n}$$
and we get the desired result.

(3) By Lemma \ref{lemma1.2} and Corollary \ref{coro3}, for any $k\in \mathbb N$, we have
$$\mathsf s_{k\mathsf p(n,r)n}(C_n^r)\le(k\mathsf p(n,r)+r)n+a_r\frac{n}{\mathsf M(n)}.$$
By lemma \ref{lemma3.2}, for any $k\in \mathbb N$, actually we have
$$\mathsf s_{k\mathsf p(n,r)n}(C_n^r)\le(k\mathsf p(n,r)+r)n+2a_r\frac{n}{\ln n}$$
and we get the desired result.\qed
\bigskip

\section{Further studies about $\mathsf M(n)$ and some improvements}

In this section, we will provide some further estimates for $\mathsf M(n)$ in some special cases. With these further estimates, we can improve our main results. All these results can be seen as some applications of $\mathsf M(n)$.

Note that, by (\ref{eq2}), it is impossible to improve the order of the lower bound in Lemma \ref{lemma3.2} of $\mathsf M(n)$ for every $n$ any more. However, we can get some better estimates in some special cases.

We denote
$$\mathsf E(x,y)=\{n\le x\text{ }|\text{ }\mathsf M(n)\le y\}$$
and $\overline{\mathsf E(x,y)}=\{n\le x\text{ }|\text{ }n\notin \mathsf E(x,y)\}$.
Let $A>1$ be any real number, in the following we shall consider
$$\mathsf E(x,(\ln x)^A)=\{n\le x\text{ }|\text{ }\mathsf M(n)\le (\ln x)^A\},$$ where $A\ge 1$.
Actually, we have the following lemma.

\begin{lemma}\label{lemma3.3} For any positive integer $A\ge 1$ and $\epsilon>0$, we have
$$|\mathsf E(x,(\ln x)^A)|=O_{A,\epsilon}(x^{1-\frac{1}{A}+\epsilon}).$$
\end{lemma}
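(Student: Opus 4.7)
The plan is to apply Rankin's trick. Let $y=(\ln x)^A$. For any $\sigma\in(0,1)$ and any $n\le x$ we have $(x/n)^\sigma\ge 1$, so
$$|\mathsf E(x,y)| \;\le\; x^\sigma \sum_{\substack{n\ge 1\\\mathsf M(n)\le y}} n^{-\sigma}.$$
The condition $\mathsf M(n)\le y$ is multiplicative: it says $n=\prod_{p}p^{v_p(n)}$ with $p^{v_p(n)}\le y$ for every prime $p$. Consequently the Dirichlet series factors as a (finite) Euler product
$$\sum_{\mathsf M(n)\le y} n^{-\sigma} \;=\; \prod_{p\le y}\sum_{0\le e\le \lfloor\log_p y\rfloor} p^{-e\sigma} \;\le\; \prod_{p\le y}\frac{1}{1-p^{-\sigma}}.$$

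Next I would bound this Euler product in Mertens style. Split off the primes $p\le 2^{2/\sigma}$, of which there are only $O_{A,\epsilon}(1)$; their factors contribute an $O_{A,\epsilon}(1)$ multiplicative constant. For the remaining primes, $p^{-\sigma}\le 1/2$, so $\log(1-p^{-\sigma})^{-1}\le 2p^{-\sigma}$, whence
$$\log\prod_{p\le y}\frac{1}{1-p^{-\sigma}} \;=\; O_{A,\epsilon}\!\Bigl(\sum_{p\le y} p^{-\sigma}\Bigr) \;\le\; O_{A,\epsilon}\!\Bigl(\tfrac{y^{1-\sigma}}{1-\sigma}\Bigr),$$
where the last step uses the trivial bound $\sum_{p\le y}p^{-\sigma}\le \sum_{n\le y}n^{-\sigma}\le y^{1-\sigma}/(1-\sigma)$ (a sharper version via Abel summation with Lemma \ref{lemma3.1} is available but unnecessary).

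Finally I would choose $\sigma=1-\frac{1}{A}+\frac{\epsilon}{2}$, so that $1-\sigma=\frac{1}{A}-\frac{\epsilon}{2}>0$ and
$$y^{1-\sigma} \;=\; (\ln x)^{A(1/A-\epsilon/2)} \;=\; (\ln x)^{1-A\epsilon/2} \;=\; o(\ln x).$$
Thus the Euler product is at most $\exp(o(\ln x))\le x^{\epsilon/2}$ once $x$ is large in terms of $A,\epsilon$, and combining everything yields
$$|\mathsf E(x,y)| \;\le\; x^{\sigma+\epsilon/2} \;=\; x^{1-1/A+\epsilon},$$
as claimed. There is no serious obstacle: the only care required is to keep $1-\sigma$ strictly smaller than $1/A$ so the Rankin penalty $y^{1-\sigma}$ stays $o(\ln x)$ and can be absorbed into the $x^\epsilon$ slack. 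In fact the argument only uses that each $n\in\mathsf E(x,y)$ is $y$-smooth; the stronger constraint $\mathsf M(n)\le y$ is given up when the inner sums over $e$ are majorised by full geometric series.
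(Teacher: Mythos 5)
Your argument is correct and is essentially the paper's own proof: both use Rankin's trick with the same exponent $\sigma=\delta=1-\frac{1}{A}+\frac{\epsilon}{2}$, majorize the constrained sum by the Euler product $\prod_{p\le(\ln x)^A}(1-p^{-\sigma})^{-1}$, and bound its logarithm by $\sum_{p\le(\ln x)^A}p^{-\sigma}=O\bigl((\ln x)^{A(1-\sigma)}/(1-\sigma)\bigr)$, which is absorbed into $x^{\epsilon/2}$. The only cosmetic difference is how the factors are linearized (you split off the primes $p\le 2^{2/\sigma}$ and use $\log(1-t)^{-1}\le 2t$, while the paper uses $1/(p^{\sigma}-1)\le c_{\sigma}p^{-\sigma}$ uniformly for $p\ge2$ followed by $1+t\le e^{t}$ and an integral comparison).
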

\begin{proof} Clearly we have $|\mathsf E(x,(\ln x)^A)|=\sum\limits_{n\le x\atop \mathsf M(n)\le (\ln x)^A}1$, then for any $\delta>0$, we have
$$\sum\limits_{n\le x\atop \mathsf M(n)\le (\ln x)^A}1\le
\sum\limits_{n\le x\atop \mathsf M(n)\le (\ln x)^A}\big(\frac{x}{n}\big)^\delta.$$
Similar to the Euler product of the Riemann zeta function, by the fundamental theorem of arithmetic, we have
\begin{align*}
\sum\limits_{n\le x\atop \mathsf M(n)\le (\ln x)^A}\big(\frac{x}{n}\big)^\delta
&\le x^\delta\prod_{p\le (\ln x)^A}(1-\frac{1}{p^{\delta}})^{-1}\\
&= x^\delta\prod_{p\le (\ln x)^A}(1+\frac{1}{p^{\delta}-1}).
\end{align*}
If we take $c_{\delta}=\frac{2^{\delta}}{2^{\delta}-1}$, then $\frac{1}{p^{\delta}-1}\le \frac{c_{\delta}}{p^{\delta}}$ and we have
$$x^\delta\prod_{p\le (\ln x)^A}(1+\frac{1}{p^{\delta}-1})\le x^\delta\prod_{p\le (\ln x)^A}(1+\frac{c_{\delta}}{p^{\delta}}).$$
As $1+x\le e^x$ for any $x\ge 0$, we have
\begin{align*}
x^\delta\prod_{p\le (\ln x)^A}(1+\frac{c_{\delta}}{p^{\delta}})&\le x^\delta\prod_{p\le (\ln x)^A}\exp(\frac{c_{\delta}}{p^{\delta}})\\
&=x^\delta \exp(\sum_{p\le (\ln x)^A}\frac{c_{\delta}}{p^{\delta}}).
\end{align*}
To estimate the last sum, we employ the relation between the sum and the integral,
\begin{align*}
x^\delta \exp(\sum_{p\le (\ln x)^A}\frac{c_{\delta}}{p^{\delta}})&\le x^\delta \exp(\sum_{2\le n\le (\ln x)^A}\frac{c_{\delta}}{n^{\delta}})\\
&\le x^\delta \exp(c_{\delta}\int_1^{(\ln x)^A}\frac{1}{t^{\delta}}dt).
\end{align*}
Therefore
\begin{align*}
x^\delta \exp(c_{\delta}\int_1^{(\ln x)^A}\frac{1}{t^{\delta}}dt)&= x^\delta \exp\big(\frac{c_{\delta}}{1-\delta}((\ln x)^{A(1-\delta)}-1)\big)\\
&=\exp(\frac{c_{\delta}}{\delta-1})x^\delta \exp\big(\frac{c_{\delta}}{1-\delta}((\ln x)^{A(1-\delta)})\big).
\end{align*}
Now, we take $\delta=1-\frac{1}{A}+\frac{\epsilon}{2}$. Since $$A(1-\delta)=1-\frac{A\epsilon}{2}<1$$ and $$\exp(\frac{c_{\delta}}{1-\delta}((\ln x)^{A(1-\delta)}))=O_{A,\epsilon}(x^{\frac{\epsilon}{2}}),$$ we have
$$\exp(\frac{c_{\delta}}{\delta-1})x^\delta \exp(\frac{c_{\delta}}{1-\delta}((\ln x)^{A(1-\delta)}))=O_{A,\epsilon}(x^{1-\frac{1}{A}+\epsilon}).$$ This completes the proof.
\end{proof}

Recall that for any fixed $r\in\mathbb N$, we denote $a_r=(cr\log_2r)^r-r$, where $c$ is the absolute constant mentioned in Lemma \ref{lemma2.1}. Let
\[
 \mathbb S_{k}^r(x,A)= \left\{ \begin{array}{ll}&\{n\le x\text{ }|\text{ }\mathsf s_{kn}(C_n^r)\le(k+r)n+a_r\frac{n}{(\ln n)^A}\}, \mbox{ if } r=3 \mbox{ or 4}, \\&\{n\le x\text{ }|\text{ }\mathsf s_{k\mathsf p(n,r)n}(C_n^r)\le(k\mathsf p(n,r)+r)n+a_r\frac{n}{(\ln n)^A}\}, \mbox{ if } r\ge5
\end{array} \right.
\]
and
$$\overline{\mathbb S_{k}^r(x,A)}=\{n\le x\text{ }|\text{ }n\notin \mathbb S_{k}^r(x,A)\}.$$

\begin{theorem}\label{theorem4.1}
For any $A\ge 1$ and $\epsilon>0$, we have
\begin{enumerate}
\item $|\overline{\mathbb S_{k}^3(x,A)}|=O_{A,\epsilon}(x^{1-\frac{1}{A}+\epsilon}),$
holds for $k\ge 6$;
\item $|\overline{\mathbb S_{k}^4(x,A)}|=O_{A,\epsilon}(x^{1-\frac{1}{A}+\epsilon}),$
holds for $k\ge 18$;
\item $|\overline{\mathbb S_{k}^r(x,A)}|=O_{A,\epsilon}(x^{1-\frac{1}{A}+\epsilon}),$
holds for $r\ge 5$ and any $k\in\mathbb N$.
\end{enumerate}
In particular,
\begin{enumerate}
\item For $k\ge 6$, we have $$\lim_{x\rightarrow\infty}\frac{|\overline{\mathbb S_{k}^3(x,A)}|}{x}=0;$$
\item For $k\ge 18$, we have $$\lim_{x\rightarrow\infty}\frac{|\overline{\mathbb S_{k}^4(x,A)}|}{x}=0;$$
\item For $r\ge 5$ and any $k\in\mathbb N$, we have $$\lim_{x\rightarrow\infty}\frac{|\overline{\mathbb S_{k}^r(x,A)}|}{x}=0;$$
\end{enumerate}
\end{theorem}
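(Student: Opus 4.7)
The plan is to deduce each bound from Lemma~\ref{lemma3.3} by a short packaging argument. From Corollary~\ref{coro2} (for parts (1) and (2)) or from Lemma~\ref{lemma1.2} combined with the definition of $\mathsf p(n,r)$ (for part (3)), the hypotheses of Corollary~\ref{coro3} hold in every case under consideration, yielding
\[
\mathsf s_{kn}(C_n^r)\le(k+r)n+a_r\,\frac{n}{\mathsf M(n)}\qquad(r=3,\,4),
\]
and, analogously for $r\ge 5$,
\[
\mathsf s_{k\mathsf p(n,r)n}(C_n^r)\le\bigl(k\mathsf p(n,r)+r\bigr)n+a_r\,\frac{n}{\mathsf M(n)}.
\]
These are precisely the bounds used in the proof of Theorem~\ref{theorem1}, with $\mathsf M(n)$ in place of $(\ln n)^A$.

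Next I would establish the inclusion $\overline{\mathbb S_k^r(x,A)}\subseteq \mathsf E(x,(\ln x)^A)$. If $n\le x$ satisfies $\mathsf M(n)\ge(\ln n)^A$, then $\frac{n}{\mathsf M(n)}\le\frac{n}{(\ln n)^A}$, and the displayed inequalities place $n$ in $\mathbb S_k^r(x,A)$. Contrapositively, every $n\in\overline{\mathbb S_k^r(x,A)}$ forces $\mathsf M(n)<(\ln n)^A\le(\ln x)^A$, hence $n\in \mathsf E(x,(\ln x)^A)$.

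With the inclusion in hand, Lemma~\ref{lemma3.3} immediately gives
\[
|\overline{\mathbb S_k^r(x,A)}|\le|\mathsf E(x,(\ln x)^A)|=O_{A,\epsilon}(x^{1-1/A+\epsilon}),
\]
which is exactly parts (1)--(3). For the three density statements, given $A\ge 1$ I pick an auxiliary $\epsilon\in(0,1/A)$, so that $1-1/A+\epsilon<1$; then
\[
\frac{|\overline{\mathbb S_k^r(x,A)}|}{x}=O_{A,\epsilon}(x^{-1/A+\epsilon})\longrightarrow 0\quad\text{as }x\to\infty.
\]

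The argument is essentially mechanical: all of the genuine analytic content is already in Lemma~\ref{lemma3.3}, and the three parts differ only in which earlier result (Corollary~\ref{coro2} for $r=3,4$, or Lemma~\ref{lemma1.2} for $r\ge 5$) is invoked to verify the hypothesis of Corollary~\ref{coro3}. I do not expect any real obstacle --- once the containment $\overline{\mathbb S_k^r(x,A)}\subseteq \mathsf E(x,(\ln x)^A)$ is noticed, everything follows from the estimates already established in Sections~2 and~3, with the only minor bookkeeping being the harmless choice of $\epsilon\in(0,1/A)$ in the limit step.
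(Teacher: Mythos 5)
Your argument is correct and matches the paper's proof essentially step for step: the same containment $\overline{\mathbb S_{k}^r(x,A)}\subseteq \mathsf E(x,(\ln x)^A)$ derived via Corollary \ref{coro3}, the same appeal to Lemma \ref{lemma3.3}, and the same choice of a small $\epsilon$ (the paper takes $\epsilon=\frac{1}{2A}$, which lies in your interval $(0,1/A)$) for the density statements. No substantive differences.
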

\begin{proof}
By the definition of $\overline{\mathsf E(x,(\ln x)^A)}$ and Corollary \ref{coro3}, it is easy to see that
$$\overline{\mathsf E(x,(\ln x)^A)}\subset \mathbb S_{k}^3(x,A).$$
Therefore, we have
$$\overline{\mathbb S_{k}^3(x,A)}\subset \mathsf E(x,(\ln x)^A).$$
The desired result follows from Lemma \ref{lemma3.3}. In particular, let $\epsilon=\frac{1}{2A}$ in the above result, we have
$$\lim_{x\rightarrow\infty}\frac{|\overline{\mathbb S_{k}^3(x,A)}|}{x}\le\lim_{x\rightarrow\infty}\frac{|\mathsf E(x,(\ln x)^A)|}{x}
=\lim_{x\rightarrow\infty}\frac{O(x^{1-\frac{1}{2A}})}{x}=0.$$
This completes the proof of (1). The proofs of (2) and (3) are similar.
\end{proof}
\medskip
\begin{remark}
According to Theorem \ref{theorem4.1}, for any $A\ge 1$, roughly speaking, for almost every $n\ge 1$ we have
\begin{enumerate}
\item For $k\ge 6$, $$\mathsf s_{kn}(C_n^3)\le (k+3)n+a_3\frac{n}{(\ln n)^A};$$
\item For $k\ge 18$, $$\mathsf s_{kn}(C_n^4)\le (k+4)n+a_4\frac{n}{(\ln n)^A};$$
\item For any $k\in\mathbb N$ and fixed $r\ge 5$, $$\mathsf s_{k\mathsf p(n,r)n}(C_n^r)\le (k\mathsf p(n,r)+r)n+a_r\frac{n}{(\ln n)^A}.$$
\end{enumerate}
\end{remark}
\medskip

Let $\omega(n)$ denote the number of distinct prime divisors of $n$. In the following, we can improve the error terms for some $n\in\mathbb N$ when $\omega(n)$ is a given integer $m$.

\begin{lemma}\label{lemma4.2}
For any $n\in\mathbb N$, we have
$$\mathsf M(n)\ge n^{\frac{1}{\omega(n)}}.$$
\end{lemma}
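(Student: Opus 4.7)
The plan is to use the prime factorization directly. Write $n = p_1^{a_1} \cdots p_k^{a_k}$ where $p_1 < \cdots < p_k$ are the distinct prime divisors of $n$ and $a_1, \ldots, a_k \in \mathbb{N}$, so that $k = \omega(n)$. By the definition of $\mathsf{M}(n)$, we have
$$\mathsf{M}(n) = \max_{1 \le i \le k} p_i^{a_i}.$$

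First I would observe that since each factor $p_i^{a_i}$ is bounded above by the maximum, the product satisfies
$$n = \prod_{i=1}^{k} p_i^{a_i} \le \left(\max_{1 \le i \le k} p_i^{a_i}\right)^k = \mathsf{M}(n)^{\omega(n)}.$$
Taking $\omega(n)$-th roots yields $\mathsf{M}(n) \ge n^{1/\omega(n)}$, as desired. The edge case $n = 1$ requires a convention (both sides equal $1$ under $\mathsf{M}(1) = 1$ and the empty product/limit interpretation of $n^{1/\omega(n)}$), but for $n \ge 2$ the argument above is immediate.

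There is essentially no obstacle here: the lemma is a one-line consequence of the fundamental theorem of arithmetic combined with the trivial inequality that a product of $k$ positive real numbers is at most the $k$-th power of their maximum. The real content of the section lies in applying this bound to refine the error term in Theorem \ref{theorem1} when $\omega(n)$ is held fixed (yielding $n^{1-1/\omega(n)}$ in place of $n/\ln n$), which I would expect to be carried out in the subsequent corollary by substituting this estimate into Corollary \ref{coro3} exactly as Lemma \ref{lemma3.2} was substituted in the proof of Theorem \ref{theorem1}.
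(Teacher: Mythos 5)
Your proof is correct and is essentially identical to the paper's: the paper orders the prime power factors so that the maximum is $q_m^{r_m}=\mathsf M(n)$ and bounds $n\le q_m^{mr_m}=\mathsf M(n)^{\omega(n)}$, which is exactly your "product is at most the $k$-th power of the maximum" step. No meaningful difference in approach.
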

\begin{proof}
For any $n\in\mathbb N$, we assume that $n=q_1^{r_1}\cdots q_m^{r_m},$
where $q_1^{r_1}<\cdots< q_m^{r_m}$ and $q_1,\ldots,q_m$ are distinct prime numbers, $r_1,\ldots,r_m\in\mathbb N$. Then by the definition of $\mathsf M(n)$, we have $\mathsf M(n)=q_m^{r_m}$.

Clearly we have $\omega(n)=m$ and
$$n=q_1^{r_1}\cdots q_m^{r_m}\le q_m^{mr_m}=\mathsf M(n)^{\omega(n)},$$
consequently $\mathsf M(n)\ge n^{\frac{1}{\omega(n)}}$.
\end{proof}

\begin{theorem}
Let $n,r,m\in \mathbb N$ with $\omega(n)=m$, we have
\begin{enumerate}
\item For $k\ge 6$, $$\mathsf s_{kn}(C_n^3)\le(k+3)n+a_3n^{1-\frac{1}{m}};$$
\item For $k\ge 18$, $$\mathsf s_{kn}(C_n^4)\le(k+4)n+a_4n^{1-\frac{1}{m}};$$
\item For any $k\in\mathbb N$ and fixed $r\ge 5$, 
$$\mathsf s_{k\mathsf p(n,r)n}(C_n^r)\le(k\mathsf p(n,r)+r)n+a_rn^{1-\frac{1}{m}}.$$
\end{enumerate}
\end{theorem}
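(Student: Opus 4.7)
The plan is to combine two ingredients already in place and reduce the statement to a one-line substitution. On the one hand, Lemma \ref{lemma4.2} gives $\mathsf M(n) \geq n^{1/\omega(n)}$; under the hypothesis $\omega(n) = m$ this becomes $n/\mathsf M(n) \leq n^{1-1/m}$. On the other hand, the proof of Theorem \ref{theorem1} in Section 3 already bounds each of $\mathsf s_{kn}(C_n^r)$ (for $r=3,4$) and $\mathsf s_{k\mathsf p(n,r) n}(C_n^r)$ (for $r\geq 5$) by the prescribed main term plus $a_r n/\mathsf M(n)$. Plugging in the new estimate converts this error into $a_r n^{1-1/m}$, which is precisely the bound claimed.

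For part (1), I would apply Corollary \ref{coro3} with $r=3$; its hypothesis $\mathsf s_{kp^a}(C_{p^a}^3) = (k+3)p^a - 3$ is supplied by Corollary \ref{coro2}(1) at $p^a = \mathsf M(n)$ and $k \geq 6$. This gives $\mathsf s_{kn}(C_n^3) \leq (k+3)n + a_3\, n/\mathsf M(n)$, and Lemma \ref{lemma4.2} finishes the bound. Part (2) is completely parallel, using Corollary \ref{coro2}(2) and $k \geq 18$.

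Part (3) requires one extra step, because the identity $\mathsf s_{kp^a}(C_{p^a}^r) = (k+r)p^a - r$ is not available for arbitrary $r \geq 5$, so Corollary \ref{coro3} cannot be cited as a black box. I would set $\mathsf M(n) = p^a$ and $\mathsf p(n,r) = p^t$. Since $p^t \geq r$ by the definition of $\mathsf p(n,r)$, and $\mathsf D(C_{p^a}^r) = r(p^a-1)+1$ by Lemma \ref{lemma1.1}, the hypothesis $p^{t+a} \geq r p^a \geq \mathsf D(C_{p^a}^r)$ of Lemma \ref{lemma1.2} is satisfied, giving $\mathsf s_{k p^t p^a}(C_{p^a}^r) = (k p^t + r) p^a - r$. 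I then re-run the standard sieve argument of Section 3 (the map $\varphi : C_{n p^a}^r \to C_n^r$ combined with $\mathsf s(C_n^r) \leq (cr\log_2 r)^r n$ from Lemma \ref{lemma2.1}) with $k$ replaced by $kp^t$ throughout; the outcome is $\mathsf s_{k \mathsf p(n,r) n}(C_n^r) \leq (k \mathsf p(n,r) + r) n + a_r\, n/\mathsf M(n)$, and Lemma \ref{lemma4.2} closes the argument.

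The main (and essentially only) obstacle is part (3): one must verify that the sieve still delivers the correct main term $(k \mathsf p(n,r) + r) n$ when the inner $p$-group identity is $(k p^t + r) p^a - r$ rather than $(k + r) p^a - r$. This is a straightforward rewriting but is the one nontrivial bookkeeping point; everything else is a clean substitution.
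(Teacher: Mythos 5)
Your proposal is correct and follows essentially the same route as the paper: combine Corollaries \ref{coro2} and \ref{coro3} (via Lemma \ref{lemma1.2} for the case $r\ge 5$) with the bound $\mathsf M(n)\ge n^{1/\omega(n)}$ to replace the error term $a_r n/\mathsf M(n)$ by $a_r n^{1-1/m}$. Your extra care in part (3) is fine but not needed beyond what the paper does, since Corollary \ref{coro3} applies verbatim with $k$ replaced by $k\mathsf p(n,r)=kp^t$ once Lemma \ref{lemma1.2} supplies $\mathsf s_{kp^t p^a}(C_{p^a}^r)=(kp^t+r)p^a-r$.
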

\begin{proof}
(1) By Corollary \ref{coro2} and \ref{coro3}, Lemma \ref{lemma4.2} and $\omega(n)=m$, for $k\ge 6$ we have
\begin{align*}
&\mathsf s_{kn}(C_n^3)\le(k+3)n+a_3\frac{n}{\mathsf M(n)}=(k+3)n+a_3\frac{n}{n^{\frac{1}{\omega(n)}}}\\
&=(k+3)n+a_3\frac{n}{n^{\frac{1}{m}}}=(k+3)n+a_3n^{1-\frac{1}{m}}.
\end{align*}
This completes the proof.
The proofs of (2) and (3) are similar.
\end{proof}

\begin{remark}
Compared with the previous error terms $\frac{n}{\ln n}$ and $\frac{n}{(\ln n)^A}$, the error term $n^{1-\frac{1}{m}}$ is a large improvement and it is valid for every $n\in\mathbb N$ with $\omega(n)=m$.
\end{remark}

\subsection*{Acknowledgments}
D.C. Han was supported by the National Science Foundation of China Grant No.11601448 and the Fundamental Research Funds for the Central Universities Grant No.2682016CX121. H.B. Zhang was supported by the National Science Foundation of China Grant No.11671218 and China Postdoctoral Science Foundation Grant No. 2017M620936. The authors would like to thank Prof. Weidong Gao for many useful comments and corrections.

\bibliographystyle{amsplain}

\end{document}